\newtheorem{theorem}{Theorem}[section]
\newtheorem{lemma}[theorem]{Lemma}
\newtheorem{proposition}[theorem]{Proposition}
\newtheorem{definition}[theorem]{Definition}
\newtheorem{example}[theorem]{Example}
\theoremstyle{remark}
\newtheorem{remark}[theorem]{\bf{Remark}}
\numberwithin{equation}{section}
\begin{document}

\title [A note on weighted composition operators on Dirichlet space]
{\Small{A note on weighted composition operators on Dirichlet space}}
	\author[S. Halder, S. Mukherjee, R. Birbonshi ]{Subhadip Halder, Sweta Mukherjee, Riddhick Birbonshi}

\address[Halder]{Department of Mathematics, Jadavpur University, Kolkata 700032, West Bengal, India}
\email{subhadiphalderju@gmail.com}

\address[Mukherjee] {Department of Mathematics, Jadavpur University, Kolkata 700032, West Bengal, India}
\email{sweta.bankati@gmail.com}
\address[Birbonshi] {Department of Mathematics, Jadavpur University, Kolkata 700032, West Bengal, India}
\email{riddhick.math@gmail.com}

\subjclass[2020]{47A12, 47B32, 47B33.}

\keywords{Numerical range, Compact operators, Dirichlet space, Weighted composition operators.}

\maketitle

\begin{abstract}
    In this paper, we provide some sufficient conditions for the compactness of weighted composition operators on Dirichlet space. Furthermore, we characterize the numerical range of certain classes of weighted composition operators on Dirichlet space and establish the criteria that guarantee the inclusion of zero within the numerical range. Finally, several classes of weighted composition operators are introduced, whose numerical range contains either a circular disk with a given center and radius or an elliptical disk with specified foci and axis lengths.
\end{abstract}

\section{introduction}
Let $\mathbb{D}=\{z \in \mathbb{C} : |z|<1\}$ be the open unit disk, and let $H(\mathbb{D})$ denote the space of all holomorphic  functions on $\mathbb{D}.$  Let $H^{\infty}$ be the space of bounded, holomorphic  functions on $\mathbb{D}$. 
For a function $f\in H(\mathbb{D})$, its Dirichlet integral is defined by
$$\mathcal{D}(f)=\frac{1}{\pi}\int_{\mathbb{D}}|f^{\prime}(z)|^2dA(z),$$ where $dA$ denotes the area Lebesgue measure on $\mathbb{D}.$
The Dirichlet space $\mathcal{D}$ is the collection of all $f\in H(\mathbb{D})$ for which $\mathcal{D}(f)$ is finite.
Clearly, $\mathcal{D}$ contains all the polynomials, more generally, all functions $f$ holomorphic on $\mathbb{D}$ such that $f^{\prime}$ is bounded on $\mathbb{D}$. If $f$ belongs to the Dirichlet space $\mathcal{D}$, then it immediately follows that its derivative $f'$ is in the Bergman space $A^2$. Moreover, if a function $f\in\mathcal{D}$, say $f(z)=\sum \limits_{k\geq0}a_kz^k$ then the quantity $$\mathcal{D}(f)=\sum\limits_{k\geq1}k|a_k|^2<\infty.$$\\
Consequently, every function in $\mathcal{D}$ is also an element of the Hardy space $H^2(\mathbb{D})$ as well as
the Bergman space $A^2$. 
For $f,g \in\mathcal{D}$, the inner product on $\mathcal{D}$ is given by
$$ \langle f,g \rangle_{\mathcal{D}}=\langle f,g \rangle_{H^2(\mathbb{D})}+\frac{1}{\pi}\int_{\mathbb{D}}f^{\prime}(z)\overline{g^{\prime}(z)}dA(z),$$
where $\langle.,.\rangle_{H^2(\mathbb{D})}$ is the usual inner product on Hardy space $H^2(\mathbb{D})$. The corresponding norm is therefore 
$$\|f\|^2_{\mathcal{D}}=\|f\|^2_{H^2(\mathbb{D})}+\frac{1}{\pi}\int_{\mathbb{D}}|f^{\prime}(z)|^2dA(z).$$
For $f(z)=\sum_{n=0}^{\infty}\hat{f}_nz^n$ and $g(z)=\sum_{n=0}^{\infty}\hat{g}_nz^n$ and $f,g\in\mathcal{D}$, inner product can also be expressed by \begin{align*}
\langle f,g \rangle_{\mathcal{D}}=\sum_{n=0}^{\infty}(n+1)\hat{f}_n\overline{\hat{g}}_n.
\end{align*}
Under this inner product, the collection of functions $e_n(z)=\frac{z^n}{\sqrt{n+1}},n\geq0$
 form an orthonormal basis for the space $\mathcal{D}$.\\
 For any  $w\in\mathbb{D}\setminus\{0\}$, define  $k_w(z)=\frac{1}{z\overline{w}}log\left(\frac{1}{1-z\overline{w}}\right),$   $z\in\mathbb{D}$ and set $k_0\equiv1$, where $log$ refers to principal branch of logarithm.
 Then $k_w,k_0\in\mathcal{D}$ and these functions are called the reproducing kernels of $\mathcal{D}$. Moreover, for each $w\in\mathbb{D}\setminus\{0\},$
 $$\|k_w\|_\mathcal{D}^2=\frac{1}{|w|^2}log\left(\frac{1}{1-|w|^2}\right).$$
 Another frequently used norm in the Dirichlet space $\mathcal{D}$  is  $$\|f\|^2=|f(0)|^2 +\frac{1}{\pi}\int_{\mathbb{D}}|f^{\prime}(z)|^2dA(z),$$ which is equivalent to $\|\cdot\|_{\mathcal{D}}.$ Unless otherwise specified, in this article the Dirichlet space is assumed to be equipped with the inner product $\langle.,.\rangle_{\mathcal{D}}$ and the corresponding norm is $\|\cdot\|_{\mathcal{D}}.$\\
 A positive borel measure $\mu$ on $\mathbb{D}$ is called Carleson measure for $\mathcal{D}$ if there exists a positive constant $c$ such that
\begin{align*}
    \int_{\mathbb{D}}|f|^2 d\mu&\leq c\|f\|^2_{\mathcal{D}},
\end{align*}
for all $f\in\mathcal{D}.$
 A multiplier of $\mathcal{D}$ is a function $h:\mathbb{D} \to \mathbb{C}$   with the property that $hf \in \mathcal{D}$ for every $f \in \mathcal{D}$. The collection of all such multipliers forms an algebra known as the multiplier algebra of  $\mathcal{D}$ which we denote by $M(\mathcal{D})$. A function $h$ belongs to  $M(\mathcal{D})$ if and only if $h$ is bounded and $|h^{\prime}|^2dA$ is a Carleson measure for $\mathcal{D}$. It is well established that multiplication operator $M_{\psi}$ is bounded for any   $\psi\in M(\mathcal{D})$. For more information on the Dirichlet space, we refer to \cite{MASHREGHI_BOOK}.

Let $\phi$ be an analytic self map of the open unit disk $\mathbb{D}$ and $\psi$ be another analytic function on $\mathbb{D}$. The weighted composition operator $C_{\psi,\phi}$ on the Dirichlet space $\mathcal{D}$ is defined by $(C_{\psi,\phi}f)(z)=\psi(z)f(\phi(z))$, for  $f\in\mathcal{D}$, $z\in\mathbb{D}$. When $\psi\equiv1$,  this operator reduces to usual composition operator.
The boundedness and compactness of  composition operators on the Dirichlet  space have been studied extensively (see \cite{Daniel_L_Dirichlet,Lefèvre_ Pascal_ et al, Mirzakarimi_Seddighi}).  In \cite{Kriete_ Moorhouse}, Kriete and Moorhouse examined the compactness of weighted composition operators for a bounded weight $\psi$. Later Gunatillake \cite{GJS_AMS_2008} showed  that  $C_{\psi,\phi}$ can be compact on $H^2(\mathbb{D})$ even if  $\psi$ is not bounded. Interestingly \cite{GJS_AMS_2008} demonstrate that  $C_{\psi,\phi}$ can be compact even when $C_\phi$ is not compact. In  \cite{LO_2022}, Lo and Loh  also gave  some sufficient conditions for the compactness of composition operators on weighted Bergman spaces. In this work, we aim to determine the conditions under which  $C_{\psi,\phi}$ is compact on $\mathcal{D}.$

 Let $T$ be a bounded linear operator on a Hilbert space $\mathcal{H}$.  Numerical range of $T$ is defined as $$W(T)=\left\{\langle Tf,f \rangle : f \in \mathcal{H}, \|f\|=1 \right\}.$$ For any set $S\subseteq\mathbb{C}$, we denote the convex hull of $S$ by $S^{\wedge}$, and $\mathbb{B}\left(\mathcal{H}\right)$ represents  the space of all bounded linear operators on $\mathcal{H}$. It is well known fact that if $T\in\mathbb{B}\left(\mathcal{H}\right)$, the numerical range $W(T)$ is convex and $\overline{W(T)}$ contains the spectrum of $T$. For more information about numerical range we refer to  \cite{wu_gau_book,GR_BOOK}. 
 The numerical range of composition operators on the Hardy space has been widely examined in the literature (see \cite{BS_JMAA_2000,BS_IEOT_2002,M_LAA_2001}). In \cite{GJS_JMAA_2014}, Gunatillake examined several properties of the numerical range of weighted composition operators on the Hardy space. Later in \cite{N_range}, several interesting results of the numerical ranges of Toeplitz and weighted composition operators on weighted Bergman spaces has been addressed.

In this article, we examine the compactness and numerical range of weighted composition operators on the Dirichlet space $\mathcal{D}$. Section 2 presents some sufficient conditions under which a weighted composition operator is compact on 
$\mathcal{D}$. Section 3 identifies the numerical ranges of certain weighted composition operators and establishes criteria ensuring that zero lies within the numerical range. Finally, in section 4, we introduce several classes of weighted composition operators whose numerical ranges include either a circular disk with specified centre and radius or an elliptical disk with given foci and lengths of axes.

\section{Compact weighted Composition operators}\label{S1}
In \cite{GJS_AMS_2008,LO_2022,Lo_H^p}, several sufficient conditions for the compactness of weighted composition operators on Hardy and Bergman Spaces are provided. Here, we turn our attention to the problem of identifying when these operators are compact on the Dirichlet space.\\
 We begin with a useful result from \cite[Theorem 2.3]{Gall_2015}.
\begin{lemma}\label{L0}
    Let $\phi:\mathbb{D}\rightarrow\mathbb{D}$ be analytic map and $M(\phi)=\{\psi\in\mathcal{D}:C_{\psi,\phi} $ is bounded on $\mathcal{D} \}$. Then $M(\phi)=\mathcal{D}$ if and only if 
\begin{eqnarray*}
    (i)\,\|\phi\|_\infty<1 \,\,\,\,  \mbox{and}\,\,\,\,  (ii) \,\phi\in M(\mathcal{D}).
\end{eqnarray*}
   
\end{lemma}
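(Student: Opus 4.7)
The plan is to prove both implications of the equivalence, with the necessity of $\|\phi\|_\infty<1$ as the nontrivial core.

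For the sufficiency, I would assume $\|\phi\|_\infty \leq r < 1$ and $\phi \in M(\mathcal{D})$, fix $\psi, f \in \mathcal{D}$, and expand
\begin{equation*}
\|C_{\psi,\phi}f\|_{\mathcal{D}}^2 = \|\psi(f\circ\phi)\|_{H^2}^2 + \frac{1}{\pi}\int_{\mathbb{D}} |\psi'(z)(f\circ\phi)(z) + \psi(z)(f'\circ\phi)(z)\phi'(z)|^2\,dA(z).
\end{equation*}
The bound $|\phi(z)|\leq r$, combined with the reproducing-kernel estimate $|f(\phi(z))| \leq \|k_{\phi(z)}\|_{\mathcal{D}}\|f\|_{\mathcal{D}}$ and the monotonicity in $|w|$ of $\|k_w\|_{\mathcal{D}}^2 = \frac{1}{|w|^2}\log\frac{1}{1-|w|^2}$, produces a uniform estimate $|f(\phi(z))| \leq C_1(r)\|f\|_{\mathcal{D}}$; a Cauchy estimate on a slightly larger disk gives the companion bound $|f'(\phi(z))| \leq C_2(r)\|f\|_{\mathcal{D}}$. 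Expanding the square in the integrand then reduces the proof to the finiteness of $\|\psi\|_{H^2}^2$, $\int_{\mathbb{D}}|\psi'|^2\,dA$, and $\int_{\mathbb{D}}|\psi|^2|\phi'|^2\,dA$. The first two are controlled by $\|\psi\|_{\mathcal{D}}^2$, and the third by $c\|\psi\|_{\mathcal{D}}^2$ after invoking that $\phi\in M(\mathcal{D})$ makes $|\phi'|^2\,dA$ a Carleson measure for $\mathcal{D}$.

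For the necessity, assume $M(\phi) = \mathcal{D}$. The easy half is $\phi \in M(\mathcal{D})$: since $z\in\mathcal{D}$, the image $C_{\psi,\phi}(z) = \psi\phi$ lies in $\mathcal{D}$ for every $\psi\in\mathcal{D}$, so by the definition of the multiplier algebra $\phi\in M(\mathcal{D})$. To obtain $\|\phi\|_\infty<1$, I would first upgrade the individual boundedness of each $C_{\psi,\phi}$ to a uniform estimate: for each fixed $f\in\mathcal{D}$ the linear map $\psi \mapsto \psi(f\circ\phi)$ is defined on all of $\mathcal{D}$ and has closed graph (as $\mathcal{D}$-convergence implies pointwise convergence), hence is bounded; an application of the uniform boundedness principle in the other variable furnishes a constant $K$ with $\|C_{\psi,\phi}\| \leq K\|\psi\|_{\mathcal{D}}$ for every $\psi\in\mathcal{D}$. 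A direct reproducing-kernel computation identifies the adjoint as $(C_{\psi,\phi})^{*}k_w = \overline{\psi(w)}\,k_{\phi(w)}$, so that
\begin{equation*}
|\psi(w)|^2\,\|k_{\phi(w)}\|_{\mathcal{D}}^2 \leq \|C_{\psi,\phi}\|^2\,\|k_w\|_{\mathcal{D}}^2 \leq K^2\,\|\psi\|_{\mathcal{D}}^2\,\|k_w\|_{\mathcal{D}}^2
\end{equation*}
for all $\psi\in\mathcal{D}$ and $w\in\mathbb{D}$. Taking the supremum over unit-norm $\psi$ on the left and using $\sup_{\|\psi\|_{\mathcal{D}}=1}|\psi(w)|^2 = \|k_w\|_{\mathcal{D}}^2$ collapses this to $\|k_{\phi(w)}\|_{\mathcal{D}}^2 \leq K^2$ for every $w\in\mathbb{D}$. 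Since $\|k_z\|_{\mathcal{D}}^2\to\infty$ as $|z|\to 1$, this forces $|\phi(w)|$ to stay uniformly away from the unit circle, so $\|\phi\|_\infty<1$.

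The main obstacle will be the final chain in the necessity direction. Individual boundedness of each $C_{\psi,\phi}$ only yields a pointwise estimate in which the right-hand side depends implicitly on $\psi$ through $\|C_{\psi,\phi}\|$; unless this dependence is controlled uniformly in $\psi$, no blow-up argument can be launched. The two ingredients that rescue the proof are the uniform boundedness principle, which replaces $\|C_{\psi,\phi}\|$ by $K\|\psi\|_{\mathcal{D}}$, and the reproducing-kernel identity $\sup_{\|\psi\|_{\mathcal{D}}=1}|\psi(w)|^2 = \|k_w\|_{\mathcal{D}}^2$, which eliminates $\psi$ entirely and leaves a clean inequality on $\|k_{\phi(w)}\|_{\mathcal{D}}$ that is incompatible with $\|\phi\|_\infty=1$.
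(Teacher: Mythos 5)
Your proof is correct. Note, though, that the paper itself contains no proof of Lemma \ref{L0}: it is imported verbatim from \cite[Theorem 2.3]{Gall_2015}, so there is no internal argument to compare against. Your reconstruction is a sound, self-contained proof and follows the natural route: for sufficiency, the uniform bounds $|f(\phi(z))|\le C_1(r)\|f\|_{\mathcal D}$ and $|f'(\phi(z))|\le C_2(r)\|f\|_{\mathcal D}$ coming from the kernel norm on $\{|w|\le r\}$ and a Cauchy estimate, together with the Carleson-measure half of the multiplier characterization (quoted in the paper from \cite[Theorem 5.1.7]{MASHREGHI_BOOK}) to control $\int_{\mathbb D}|\psi|^2|\phi'|^2\,dA$; for necessity, the identity function to extract $\phi\in M(\mathcal D)$, and the closed-graph/uniform-boundedness upgrade to $\|C_{\psi,\phi}\|\le K\|\psi\|_{\mathcal D}$ followed by testing the adjoint on normalized kernels. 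The two points worth writing out explicitly in a final version are (a) the pointwise boundedness needed for the uniform boundedness principle, namely $\sup_{\|f\|_{\mathcal D}\le 1}\|\psi(f\circ\phi)\|_{\mathcal D}=\|C_{\psi,\phi}\|<\infty$ for each fixed $\psi$, which is exactly the hypothesis $M(\phi)=\mathcal D$, and (b) that $t\mapsto \frac{1}{t}\log\frac{1}{1-t}$ is increasing and unbounded on $(0,1)$, so $\|k_{\phi(w)}\|_{\mathcal D}\le K$ genuinely forces $\sup_w|\phi(w)|<1$; both are routine and your outline already contains them.
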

In \cite[Theorem 2]{GJS_AMS_2008}, Gunatillake provided a sufficient condition for compactness of $C_{\psi,\phi}$ on $H^2({\mathbb{D}}).$  Here, our goal is to investigate whether an analogous result holds for the Dirichlet space. To proceed, we first present the following lemmas.
\begin{lemma}
    Let $\phi\in M(\mathcal{D})$ and $\overline{\phi(\mathbb{D})}\subseteq\mathbb{D}.$ Then for any $\psi\in\mathcal{D},$ $C_{\psi,\phi}$ is bounded in $\mathcal{D}.$
\end{lemma}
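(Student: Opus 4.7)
The lemma is essentially a direct corollary of Lemma \ref{L0}, so my plan is to verify that the two hypotheses of that lemma hold under the assumptions given, and then invoke it.

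The first step is to translate the geometric condition $\overline{\phi(\mathbb{D})} \subseteq \mathbb{D}$ into the sup-norm condition $\|\phi\|_\infty < 1$. The set $\overline{\phi(\mathbb{D})}$ is closed (by definition) and bounded (since it is contained in the open unit disk), hence compact in $\mathbb{C}$. The continuous function $w \mapsto |w|$ attains its maximum on this compact set, and because the set is contained in the open unit disk, that maximum is strictly less than $1$. Therefore
\[
\|\phi\|_\infty \;=\; \sup_{z\in\mathbb{D}} |\phi(z)| \;=\; \sup_{w\in\overline{\phi(\mathbb{D})}} |w| \;<\; 1,
\]
which gives condition (i) of Lemma \ref{L0}.

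Condition (ii) of Lemma \ref{L0}, namely $\phi \in M(\mathcal{D})$, is built into the hypothesis of our lemma, so nothing further needs to be done here. Combining these two, Lemma \ref{L0} yields $M(\phi) = \mathcal{D}$. By the very definition of $M(\phi)$, this means that for every $\psi \in \mathcal{D}$ the weighted composition operator $C_{\psi,\phi}$ is bounded on $\mathcal{D}$, which is exactly the conclusion.

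There is no real obstacle in this proof; the only point that deserves a line of care is the compactness argument used to pass from $\overline{\phi(\mathbb{D})} \subseteq \mathbb{D}$ to a strict bound $\|\phi\|_\infty < 1$, since the statement is given in the geometric form. Once that small topological step is noted, the lemma follows immediately from Lemma \ref{L0} with no further computation.
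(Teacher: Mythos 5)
Your proof is correct and follows exactly the same route as the paper's: both verify the two hypotheses of Lemma \ref{L0} and then invoke it to conclude $M(\phi)=\mathcal{D}$. Your version simply spells out the compactness argument showing $\overline{\phi(\mathbb{D})}\subseteq\mathbb{D}$ implies $\|\phi\|_\infty<1$, which the paper takes for granted.
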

\begin{proof}
    Since we have $\phi\in M(\mathcal{D})$ and $\overline{\phi(\mathbb{D})}\subseteq\mathbb{D},$ so from Lemma \ref{L0}, $M(\phi)=\mathcal{D}.$ Hence, for any $\psi\in\mathcal{D},$ $C_{\psi,\phi}$ is bounded on $\mathcal{D}.$
\end{proof}

\begin{lemma}\label{L1}\cite{LIU}
Suppose $C_{\psi,\phi}$ is a bounded operator on $\mathcal{D}$. Then $C_{\psi,\phi}$ is compact if and only if whenever $\{f_{n}\}$ is a bounded sequence in $\mathcal{D}$ and $\{f_{n}\}$ uniformly converges to 0 on  compact subsets of $\mathbb{D}$, $\{C_{\psi,\phi}(f_{n})\}$ converges to 0 in $\mathcal{D}$.
\end{lemma}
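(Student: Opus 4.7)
The plan is to prove both directions by exploiting the fact that $\mathcal{D}$ is a reproducing kernel Hilbert space, so that convergence uniformly on compact subsets of $\mathbb{D}$ interacts cleanly with weak convergence in $\mathcal{D}$. The central bridge is the observation that if $\{f_n\}$ is bounded in $\mathcal{D}$ and converges to $0$ uniformly on compact subsets of $\mathbb{D}$, then $f_n \to 0$ weakly in $\mathcal{D}$; this is because $\langle f_n,k_w\rangle_{\mathcal{D}} = f_n(w) \to 0$ for every $w\in\mathbb{D}$, and the linear span of the reproducing kernels $\{k_w : w \in \mathbb{D}\}$ is dense in $\mathcal{D}$, so boundedness of $\{f_n\}$ upgrades this to weak convergence on all of $\mathcal{D}$.

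For the forward direction, assume $C_{\psi,\phi}$ is compact. Let $\{f_n\}$ be bounded in $\mathcal{D}$ with $f_n \to 0$ uniformly on compact subsets of $\mathbb{D}$. By the bridge just described, $f_n \to 0$ weakly in $\mathcal{D}$. Since compact operators send weakly convergent sequences to norm-convergent sequences with the corresponding limit, $C_{\psi,\phi} f_n \to 0$ in the $\|\cdot\|_{\mathcal{D}}$-norm, which is the desired conclusion.

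For the reverse direction, assume the stated sequential condition and let $\{f_n\}$ be an arbitrary bounded sequence in $\mathcal{D}$. Since $\mathcal{D}\subseteq H(\mathbb{D})$ and point evaluations are bounded linear functionals on $\mathcal{D}$, the sequence $\{f_n\}$ is locally uniformly bounded in $\mathbb{D}$; by Montel's theorem there is a subsequence, still denoted $\{f_n\}$, converging uniformly on compact subsets of $\mathbb{D}$ to some $f\in H(\mathbb{D})$. By the Banach--Alaoglu theorem applied in the Hilbert space $\mathcal{D}$, a further subsequence (still denoted $\{f_n\}$) converges weakly in $\mathcal{D}$ to some $g\in\mathcal{D}$. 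Comparing with reproducing kernels, $g(w)=\langle g,k_w\rangle_{\mathcal{D}} = \lim_n \langle f_n,k_w\rangle_{\mathcal{D}} = \lim_n f_n(w) = f(w)$ for every $w\in\mathbb{D}$, so $f=g\in\mathcal{D}$. Then $\{f_n - f\}$ is bounded in $\mathcal{D}$ and converges to $0$ uniformly on compact subsets of $\mathbb{D}$, so by hypothesis $C_{\psi,\phi}(f_n - f)\to 0$ in $\mathcal{D}$. Thus $C_{\psi,\phi}f_n \to C_{\psi,\phi}f$ in norm, which shows that $C_{\psi,\phi}$ maps bounded sequences to sequences with norm-convergent subsequences; hence $C_{\psi,\phi}$ is compact.

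The only delicate step, and the one I expect to require the most care in the write-up, is the identification of the uniform-on-compacta limit $f$ with an element of $\mathcal{D}$ in the reverse direction; one must simultaneously extract a compact-open limit (via Montel) and a weak limit (via Banach--Alaoglu) and verify they coincide through the reproducing kernel identity. Everything else is standard functional-analytic bookkeeping once the RKHS structure of $\mathcal{D}$ is in hand.
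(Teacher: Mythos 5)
The paper offers no proof of this lemma at all --- it is imported verbatim from \cite{LIU} --- so there is nothing internal to compare your argument against. Your proof is correct and is the standard argument for such compactness criteria in reproducing kernel Hilbert spaces of analytic functions: bounded plus locally uniform convergence to $0$ gives weak convergence to $0$ via density of the kernels, which handles the forward direction, while Montel plus weak sequential compactness of bounded sets in $\mathcal{D}$ (this, rather than Banach--Alaoglu per se, is what you are really invoking) identifies the compact-open limit of a bounded sequence as an element of $\mathcal{D}$ and reduces the converse to the hypothesis applied to $f_n-f$. The only step worth making explicit in a write-up is that $w\mapsto\|k_w\|_{\mathcal{D}}$ is bounded on compact subsets of $\mathbb{D}$ (note $\|k_w\|_{\mathcal{D}}^2\to 1$ as $w\to 0$), which is what turns norm-boundedness of $\{f_n\}$ into local uniform boundedness before applying Montel.
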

\begin{lemma}\label{LL1}\cite{Stein_book}
Suppose a sequence of holomorphic functions $\{f_n\}$ on $\mathbb{D}$ converges uniformly to 0 on all compact subsets of $\mathbb{D}.$ Then $\{f^{\prime}_n\}$ converges uniformly to 0 on all compact subsets of $\mathbb{D}$.
 \end{lemma}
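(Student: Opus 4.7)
The plan is to use the Cauchy integral formula to bound $|f_n'(z)|$ on a compact set $K$ by the supremum of $|f_n|$ on a slightly larger compact set that still sits inside $\mathbb{D}$.

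First, I would fix an arbitrary compact set $K\subset\mathbb{D}$. By compactness of $K$ and openness of $\mathbb{D}$, the distance $d=\mathrm{dist}(K,\partial\mathbb{D})$ is strictly positive. I would then pick any $r\in(0,d)$ and define
\[
K_r=\{z\in\mathbb{C}:\mathrm{dist}(z,K)\le r\}.
\]
This set $K_r$ is closed, bounded, and contained in $\mathbb{D}$, hence compact in $\mathbb{D}$. By hypothesis, $f_n\to 0$ uniformly on $K_r$.

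Next, I would invoke Cauchy's integral formula for the derivative: for each $z\in K$, the circle $\{w:|w-z|=r\}$ lies inside $K_r\subset\mathbb{D}$, so
\[
f_n'(z)=\frac{1}{2\pi i}\oint_{|w-z|=r}\frac{f_n(w)}{(w-z)^2}\,dw.
\]
Estimating the integral in the standard way gives
\[
|f_n'(z)|\le\frac{1}{r}\sup_{w\in K_r}|f_n(w)|,
\]
and the right-hand side is independent of $z\in K$. Since $\sup_{K_r}|f_n|\to 0$, this shows $\sup_{K}|f_n'|\to 0$, which is exactly the desired uniform convergence of $\{f_n'\}$ to $0$ on $K$.

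There is no genuine obstacle here; the only point that requires a little care is the geometric setup, namely choosing $r$ small enough that the $r$-thickening $K_r$ still lies inside $\mathbb{D}$ so that Cauchy's formula applies uniformly for all $z\in K$. Once that is in place, the Cauchy estimate is immediate and the conclusion follows.
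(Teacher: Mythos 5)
Your proof is correct: the paper gives no argument of its own for this lemma, only a citation to Stein--Shakarchi, and your argument (Cauchy's integral formula for $f_n'$ over a circle of radius $r$ inside an $r$-thickening $K_r\subset\mathbb{D}$ of the compact set $K$, giving $\sup_K|f_n'|\le \tfrac{1}{r}\sup_{K_r}|f_n|\to 0$) is exactly the standard proof found in that reference. The geometric point you flag --- choosing $r<\mathrm{dist}(K,\partial\mathbb{D})$ so that $K_r$ is still a compact subset of $\mathbb{D}$ --- is handled correctly, so there is nothing to add.
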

\begin{theorem}\label{Th1}
    
  Suppose $\overline{\phi(\mathbb{D})}$ is contained in the open unit disk $\mathbb{D}$ and $|\phi^{\prime}|^2dA$ is Carleson measure in $\mathcal{D}$. Then for any $\psi$ in $\mathcal{D}$, the weighted composition operator $C_{\psi,\phi}$ is compact in $\mathcal{D}$.
  \end{theorem}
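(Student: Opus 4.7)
The plan is to verify the hypotheses of Lemma \ref{L1} by producing explicit estimates for $\|C_{\psi,\phi}f_n\|_{\mathcal{D}}^2$ split according to the two pieces of the norm. First I would address boundedness as a preliminary: since $\overline{\phi(\mathbb{D})}\subseteq\mathbb{D}$ we have $\|\phi\|_\infty<1$, and together with the Carleson hypothesis on $|\phi'|^2\,dA$, the multiplier characterisation recorded in the introduction gives $\phi\in M(\mathcal{D})$. Lemma \ref{L0} then ensures that $C_{\psi,\phi}$ is bounded for every $\psi\in\mathcal{D}$, so Lemma \ref{L1} is available.

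Now let $\{f_n\}$ be a bounded sequence in $\mathcal{D}$ with $f_n\to 0$ uniformly on compact subsets of $\mathbb{D}$. Set $K=\overline{\phi(\mathbb{D})}$, a compact subset of $\mathbb{D}$ by hypothesis. Uniform convergence on $K$ yields $\|f_n\circ\phi\|_\infty\to 0$, and Lemma \ref{LL1} applied on a slightly larger compact set containing $K$ gives $\|f_n'\circ\phi\|_\infty\to 0$ as well. Writing
\[
\|C_{\psi,\phi}f_n\|_{\mathcal{D}}^2=\|\psi(f_n\circ\phi)\|_{H^2(\mathbb{D})}^2+\frac{1}{\pi}\int_{\mathbb{D}}\bigl|\psi'(z)(f_n\circ\phi)(z)+\psi(z)(f_n'\circ\phi)(z)\phi'(z)\bigr|^2\,dA(z),
\]
I would bound each contribution separately.

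The $H^2$ piece is dominated by $\|f_n\circ\phi\|_\infty^2\|\psi\|_{H^2(\mathbb{D})}^2$, which tends to $0$ because $\psi\in\mathcal{D}\subseteq H^2(\mathbb{D})$. Expanding the area integral via the elementary inequality $|a+b|^2\le 2|a|^2+2|b|^2$ produces two terms. The first, $\int_{\mathbb{D}}|\psi'|^2|f_n\circ\phi|^2\,dA$, is at most $\|f_n\circ\phi\|_\infty^2\cdot\pi\mathcal{D}(\psi)$ and so tends to $0$. The decisive second term is $\int_{\mathbb{D}}|\psi|^2|f_n'\circ\phi|^2|\phi'|^2\,dA$, which I would estimate by pulling out $\|f_n'\circ\phi\|_\infty^2$ and then invoking the Carleson measure hypothesis on $d\mu=|\phi'|^2\,dA$ to get $\int_{\mathbb{D}}|\psi|^2\,d\mu\le c\|\psi\|_{\mathcal{D}}^2$. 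Hence this term is at most $c\|\psi\|_{\mathcal{D}}^2\|f_n'\circ\phi\|_\infty^2$, which again tends to $0$.

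The only real subtlety is the term involving $\psi(f_n'\circ\phi)\phi'$: here $\psi$ need not be bounded on $\mathbb{D}$, so one cannot simply pull a sup-norm of $\psi$ outside the integral. This is precisely where the Carleson hypothesis on $|\phi'|^2\,dA$ is used in an essential way, converting the integral of $|\psi|^2$ against that measure into a controlled multiple of $\|\psi\|_{\mathcal{D}}^2$. Combining all estimates gives $\|C_{\psi,\phi}f_n\|_{\mathcal{D}}\to 0$, and Lemma \ref{L1} then yields compactness.
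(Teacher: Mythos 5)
Your proposal is correct and follows essentially the same route as the paper: boundedness via Lemma \ref{L0}, the same splitting of $\|C_{\psi,\phi}f_n\|_{\mathcal{D}}^2$ into the $H^2$ part and the area integral, and the same decisive use of the Carleson hypothesis on $|\phi'|^2\,dA$ to control $\int_{\mathbb{D}}|\psi|^2|f_n'\circ\phi|^2|\phi'|^2\,dA$. The only (minor, and arguably cleaner) difference is that you bound the $H^2$ piece directly by $\|f_n\circ\phi\|_\infty^2\|\psi\|_{H^2(\mathbb{D})}^2$, whereas the paper cites Gunatillake's Hardy-space compactness lemma for that step.
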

\begin{proof}
    
 Since $\overline{\phi(\mathbb{D})}\subseteq \mathbb{D}$, there exists $M$ with $0<M<1$ such that $|\phi(z)|\leq M, \forall z\in\mathbb{D}$.
Clearly, $\|\phi\|_\infty<1$ and since $|\phi^{\prime}|^2dA$ is Carleson measure, so by \cite[Theorem 5.1.7]{MASHREGHI_BOOK}, $\phi\in M(\mathcal{D})$. Then by Lemma \ref{L0}, $C_{\psi,\phi}$ is bounded on $\mathcal{D}.$\\
Let $\{f_n\}$ be a bounded sequence in $\mathcal{D}$, and $f_n \rightarrow 0$ uniformly on compact subsets of $\mathbb{D}$.
  Therefore, $f_n\rightarrow 0$ uniformly on $\overline{S(0,M)}$, where  $S(0,M)$ denotes the open ball centered at 0 with radius $M$. Consequently, there exists $N\in\mathbb{N}$ for all $n>{N}$ 
  $|f_n(\phi(z))|<\epsilon$ and $|f^{\prime}_n(\phi(z))|<\epsilon$, for every  $z\in\mathbb{D}$ by Lemma \ref{LL1}. Hence, for all $n>N$,
\begin{align*}
    \|C_{\psi,\phi}(f_n)\|^2_{\mathcal{D}}&=\|C_{\psi,\phi}(f_n)\|^2_{{H^{2}({\mathbb{D})}}}+\frac{1}{\pi}\int_{\mathbb{D}}|(C_{\psi,\phi}(f_n))^\prime(z)|^2dA(z)\\
    &=\|C_{\psi,\phi}(f_n)\|^2_{{H^{2}({\mathbb{D})}}}+\frac{1}{\pi}\int_{\mathbb{D}}|(\psi(f_n\circ\phi))^{\prime}(z)|^2dA(z)\\
    &=\|C_{\psi,\phi}(f_n)\|^2_{{H^{2}({\mathbb{D})}}}+\frac{1}{\pi}\int_{\mathbb{D}}|\psi^{\prime}(z)(f_n\circ\phi)(z)+\psi(z) f_n^{\prime}(\phi(z))\phi^{\prime}(z)|^2 dA(z)\\
    &\leq\|C_{\psi,\phi}(f_n)\|^2_{{H^{2}({\mathbb{D})}}}+\frac{2}{\pi}\int_{\mathbb{D}}\left(|\psi^{\prime}(z)(f_n\circ\phi)(z)|^2+|\psi(z) f_n^{\prime}(\phi(z))\phi^{\prime}(z)|^2\right) dA(z). 
\end{align*}
    
 Since $\psi\in\mathcal{D}\subseteq{H^2(\mathbb{D})},$ $\{f_n\}$ is bounded on $H^2(\mathbb{D})$, then from \cite[Lemma 1, Theorem 2]{GJS_AMS_2008}, $C_{\psi,\phi}(f_n)\rightarrow 0$  on $H^2(\mathcal{D}).$
 Again,
\begin{align*}
    \int_{\mathbb{D}}
    |\psi^{\prime}(z)(f_n\circ\phi)(z)|^2dA(z)&\leq\epsilon^2\int_{\mathbb{D}}|\psi^{\prime}(z)|^2dA(z) \\
    &=\epsilon^2\|\psi^{\prime}\|_{A^2} 
\end{align*}
and
\begin{align*}
    \int_{\mathbb{D}}|\psi(z) f_n^{\prime}(\phi(z))\phi^{\prime}(z)|^2 dA(z)&\leq\epsilon^2\int_{\mathbb{D}}|\psi(z)\phi^{\prime}(z)|^2dA(z)\\
    &=\epsilon^2\int_{\mathbb{D}}|\psi(z)|^2|\phi^{\prime}(z)|^2dA(z).
\end{align*}
Since $|\phi^{\prime}|^2dA$ is Carleson measure in $\mathcal{D},$ then we get
\begin{align*}
    \int_{\mathbb{D}}|\psi(z) f_n^{\prime}(\phi)(z)\phi^{\prime}(z)|^2 dA(z)&\leq\epsilon^2c\|\psi\|_{\mathcal{D}},
\end{align*}
for some $c>0$. Therefore, $\|C_{\psi,\phi}(f_n)\|_{\mathcal{D}}$ tends to 0. 
Hence, from Lemma \ref{L1}, $C_{\psi,\phi}$ is compact in $\mathcal{D}$.
\end{proof}
 We now provide an example demonstrating Theorem \ref{Th1} when $\psi$ is unbounded on $\mathbb{D}.$ 
\begin{example}
    Let $\phi(z)=\frac{z^2}{2}$ and $\psi(z)=\sum\limits_{k\geq2}\frac{z^k}{k\log k}, \,z\in\mathbb{D}.$ Then from \cite[Page 4]{MASHREGHI_BOOK}, $\psi\in\mathcal{D}$ and $\psi$ is unbounded on $\mathbb{D}$. Since $M(\mathcal{D})$ includes all polynomials, it follows from \cite[Theorem 5.1.7]{MASHREGHI_BOOK} that the measure $|\phi^{\prime}|^2dA$ is a Carleson measure. Therefore the functions $\psi$ and $\phi$ satisfy the conditions of Theorem \ref{Th1}, and it follows that the operator $C_{\psi,\phi}$ is compact on $\mathcal{D}.$
\end{example}

In \cite[Theorem 2.8]{LO_2022}, Lo and Loh established a condition ensuring the compactness of $C_{\psi,\phi}$ on weighted Bergman spaces. Here, we examine whether the same conditions guarantee compactness in the Dirichlet space. The answer turns out to be negative. To demonstrate this, we construct an example of a weighted composition operator that satisfies all the assumptions of \cite[Theorem 2.8]{LO_2022} for $\alpha=0$. Consequently, the operator is compact on Bergman space but fails to be compact in the Dirichlet space. To establish this example, first we prove the following result. 
\begin{theorem}\label{Th0}
    If $C_{\psi,\phi}$ is compact on $\mathcal{D}$, then $\lim\limits_{|z|\to 1^{-}}\tilde{f}(z)=0$, where $$\tilde{f}(z)=\begin{cases}
    |\psi(z)|\frac{|z|}{|\phi(z)|}\sqrt{\frac{log\left(\frac{1}{1-|\phi(z)|^2}\right)}{log\left(\frac{1}{1-|z|^2}\right)}}, &  \phi(z)\neq 0,z\neq0\\
   \frac{ |\psi(0)|}{|\phi(0)|}\sqrt{log\left(\frac{1}{1-|\phi(0)|^2}\right)},& \phi(z)\neq0,z=0\\
    |\psi(0)|,& \phi(z)=0,z=0\\
    \frac{|\psi(z)\|z|}{\sqrt{log\left(\frac{1}{1-|z|^2}\right)}},& \phi(z)=0, z\neq 0
    \end{cases}.$$
\end{theorem}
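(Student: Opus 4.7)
The plan is to test compactness against normalized reproducing kernels, which form the natural weakly null family in $\mathcal{D}$. First, I would show that $\widetilde{k}_w := k_w/\|k_w\|_{\mathcal{D}}$ converges weakly to $0$ in $\mathcal{D}$ as $|w|\to 1^{-}$. Since $\|k_w\|_{\mathcal{D}}^2 = |w|^{-2}\log(1/(1-|w|^2)) \to \infty$ and polynomials are norm-dense in $\mathcal{D}$ (and bounded on $\mathbb{D}$), a standard density/$\varepsilon$-argument gives $\langle f, \widetilde{k}_w\rangle_{\mathcal{D}} = f(w)/\|k_w\|_{\mathcal{D}} \to 0$ for every $f \in \mathcal{D}$: approximate $f$ in norm by a polynomial $p$, bound $|p(w)|$ by $\|p\|_{\infty}$, and use Cauchy--Schwarz on the remainder.

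Next I would compute the action of the adjoint on reproducing kernels. A direct calculation using the reproducing property gives $C_{\psi,\phi}^{*}\,k_w = \overline{\psi(w)}\,k_{\phi(w)}$, since for every $f \in \mathcal{D}$,
$$\langle f,\,C_{\psi,\phi}^{*}k_w\rangle_{\mathcal{D}} \;=\; \langle C_{\psi,\phi} f,\,k_w\rangle_{\mathcal{D}} \;=\; \psi(w)\,f(\phi(w)) \;=\; \langle f,\,\overline{\psi(w)}\,k_{\phi(w)}\rangle_{\mathcal{D}}.$$
In particular, $\|C_{\psi,\phi}^{*}k_w\|_{\mathcal{D}} = |\psi(w)|\,\|k_{\phi(w)}\|_{\mathcal{D}}$, recalling that $\|k_0\|_{\mathcal{D}} = \|1\|_{\mathcal{D}} = 1$.

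Finally, since $C_{\psi,\phi}$ is compact, so is $C_{\psi,\phi}^{*}$, and compact operators send weakly null families to norm null families. Therefore $\|C_{\psi,\phi}^{*}\widetilde{k}_w\|_{\mathcal{D}} \to 0$ as $|w|\to 1^{-}$. Substituting the explicit formula for $\|k_w\|_{\mathcal{D}}$ and splitting into the two cases $\phi(w)=0$ and $\phi(w)\neq 0$ (note that $w\neq 0$ for $|w|$ near $1$, so the degenerate $z=0$ cases in the definition of $\widetilde{f}$ are irrelevant for the limit), one reads off
$$\|C_{\psi,\phi}^{*}\widetilde{k}_w\|_{\mathcal{D}} \;=\; |\psi(w)|\,\frac{\|k_{\phi(w)}\|_{\mathcal{D}}}{\|k_w\|_{\mathcal{D}}} \;=\; \widetilde{f}(w),$$
which yields the claim.

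The main obstacle is the bookkeeping in the first step: the Cauchy--Schwarz bound $|f(w)| \leq \|f\|_{\mathcal{D}}\,\|k_w\|_{\mathcal{D}}$ only shows that $f(w)/\|k_w\|_{\mathcal{D}}$ stays bounded, not that it vanishes, so one genuinely has to exploit density of a subclass (polynomials, or $H^{\infty}\cap\mathcal{D}$) on which pointwise values grow strictly slower than $\|k_w\|_{\mathcal{D}}$. Once weak nullity is established, the rest is just unwinding definitions and checking the piecewise formula for $\widetilde{f}$.
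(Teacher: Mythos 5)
Your proposal is correct and follows essentially the same route as the paper: compute $C_{\psi,\phi}^{*}k_w=\overline{\psi(w)}k_{\phi(w)}$ via the reproducing property, use that the compact adjoint sends the weakly null normalized kernels to a norm-null family, and identify $\|C_{\psi,\phi}^{*}\widetilde{k}_w\|_{\mathcal{D}}$ with $\widetilde{f}(w)$. The only difference is that you sketch a direct density argument for the weak nullity of $\widetilde{k}_w$ (which is a correct and self-contained substitute), whereas the paper simply cites \cite[Theorem 2.17]{Cowen_book}.
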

\begin{proof}
    If $a$ is an arbitrary point in $\mathbb{D}$, then $\langle f,k_a\rangle=f(a)$ for all $f$ in $\mathcal{D}$, where the reproducing kernel at $z=a$ is given by $$k_a(z)=\frac{1}{\overline{a}z}log\left(\frac{1}{1-\overline{a}z}\right),a\neq 0,\forall z\in\mathbb{D}$$ and $$k_0\equiv1.$$\\
    The norm of the kernel is $\|k_a\|^2_{\mathcal{D}}=\frac{1}{|a|^2}log\left(\frac{1}{1-|a|^2}\right)$.
    Further, for every $g\in\mathcal{D},$ $\langle(C_{\psi,\phi})^*k_a,g\rangle=\langle\overline{\psi(a)}k_\phi(a),g\rangle.$ 
    This implies $(C_{\psi,\phi})^*k_a=\overline{\psi(a)}k_{\phi(a)}.$ Therefore,
    \begin{align*}
    \|(C_{\psi,\phi})^*K_z\|_{\mathcal{D}}&=\left\|(C_{\psi,\phi})^*\frac{k_z}{\|k_z\|_{\mathcal{D}}}\right\|_{\mathcal{D}}\\&=\frac{|\psi(z)|\|k_{\phi(z)}\|_{\mathcal{D}}}{\|k_z\|_{\mathcal{D}}}\\&=|\psi(z)|\frac{|z|}{|\phi(z)|}\sqrt{\frac{log\left(\frac{1}{1-|\phi(z)|^2}\right)}{log\left(\frac{1}{1-|z|^2}\right)}},
    \end{align*}
    where $|\phi(z)|\neq 0,\, z\neq0$ and $K_z$ denotes the normalized reproducing kernel on $\mathcal{D}$ at $z$.\\
    When $|\phi(z)|\neq 0\, z=0;$ we have $\|(C_{\psi,\phi})^*K_z\|_{\mathcal{D}}=
   \frac{ |\psi(0)|}{|\phi(0)|}\sqrt{log\left(\frac{1}{1-|\phi(0)|^2}\right)}.$ \\
   For $|\phi(z)|= 0$ and $z=0;$ we get $\|(C_{\psi,\phi})^*K_z\|_{\mathcal{D}}=|\psi(0)|.$\\
   Similarly
    when $\phi(z)=0$ and $z\neq 0;$ we have $\|(C_{\psi,\phi})^*K_z\|_{\mathcal{D}}=
    \frac{|\psi(z)\|z|}{\sqrt{log\left(\frac{1}{1-|z|^2}\right)}}.$\\
    Since $C_{\psi,\phi}$ is compact, its adjoint $(C_{\psi,\phi})^*$ is compact.  Moreover, by \cite[Theorem 2.17]{Cowen_book}, $K_z$ converges weakly to 0 in $\mathcal{D}$ as $|z|\rightarrow 1^{-}$. Hence, the desired condition follows.
\end{proof}

\begin{example}
    Let $\psi(z)=(1-\sqrt{1-z})^2$ and $\phi(z)=1-\sqrt{1-z}.$ It is straightforward to verify that these functions satisfy condition (i) of \cite[Theorem 2.8]{LO_2022}. Furthermore, since $\lim\limits_{|z|\rightarrow1^{-}} \frac{|1-\sqrt{1-z}|(1-|z|^2)}{|\sqrt{1-z}|}=0$, condition (ii) of \cite[Theorem 2.8]{LO_2022} is also satisfied. Clearly $\psi\in A^2$ and $\phi$ has no angular derivative on the boundary of the unit disk, \cite[Theorem 3.22]{Cowen_book} implies that $C_{\phi}$ is compact on $A^2$. Because $\psi$ is bounded on $\mathbb{D},$ \cite[Theorem 2.6]{LO_2022} ensures that $C_{\psi,\phi}$ is compact on $A^2.$ Applying \cite[Theorem 2.4]{LO_2022}, we conclude that the condition (iii) of \cite[Theorem 2.8]{LO_2022} also holds and hence, $C_{\psi,\phi}$ is compact on the Bergman space. Next, we consider the following limit
    \begin{align*}
  \lim\limits_{r\to 1^{-}}\frac{|\psi(r)|r}{\phi(r)}\sqrt{\frac{log(\frac{1}{1-(\phi(r))^2})}{log(\frac{1}{1-r^2})}}
  &=\lim\limits_{r\to 1^{-}}(1-\sqrt{1-r})r\sqrt{\frac{log(\frac{1}{1-(\phi(r))^2})}{log(\frac{1}{1-r^2})}}\\
  &=\frac{1}{\sqrt{2}}\neq 0.
  \end{align*}
  Therefore, by Theorem \ref{Th0}, $C_{\psi,\phi}$ is not compact on $\mathcal{D}$.\\ From this, we can conclude that any operator  $C_{\psi,\phi}$ that is compact on $A^2$ need not be compact on $\mathcal{D}$.
    \end{example}
 From the above example, we observe that the conditions stated in \cite[Theorem 2.8]{LO_2022}, are not sufficient to ensure the compactness of $C_{\psi,\phi}$ on the Dirichlet space $\mathcal{D}.$ Therefore, our objective is to establish some additional conditions, along with those of \cite[Theorem 2.8]{LO_2022}, that will guarantee the compactness of the operator $C_{\psi,\phi}$ on $\mathcal{D}.$ To proceed, we first recall the following Lemma from \cite{LO_2022}, for the case $\alpha=0.$

\begin{lemma}\label{L2}
 If $f$ is in the Bergman space $A^2$, then 
$$\frac{2}{3}\|f\|^2_{A^2}\leq|f(0)|^2+\int_{\mathbb{D}}|f^{\prime}(z)|^2(1-|z|^2)^2dA(z)\leq 2\|f\|^2_{A^2}.$$

\end{lemma}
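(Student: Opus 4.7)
The plan is to reduce the inequalities to an elementary coefficient-by-coefficient comparison via Taylor series, since everything in sight is rotationally invariant once we pass to power series.

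First I would write $f(z)=\sum_{n=0}^{\infty}a_nz^n$ and exploit the orthogonality of the monomials $\{z^n\}$ in $A^2$ to express the Bergman norm (with the normalization used in the paper) as
\[
\|f\|^2_{A^2}=\sum_{n=0}^{\infty}\frac{|a_n|^2}{n+1}.
\]
Since $f'(z)=\sum_{n\ge 1}na_nz^{n-1}$, switching to polar coordinates and using orthogonality of $\{e^{ik\theta}\}$ on $[0,2\pi]$ reduces
\[
\frac{1}{\pi}\int_{\mathbb{D}}|f'(z)|^2(1-|z|^2)^2\,dA(z)
\]
to a sum of one-dimensional integrals $\int_0^1 r^{2n-1}(1-r^2)^2\,dr$. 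A change of variables $u=r^2$ identifies these with Beta-function values $\tfrac12 B(n,3)=\tfrac{1}{n(n+1)(n+2)}$, after which the $n^2$ from the differentiation cancels one factor and produces the clean expression
\[
|f(0)|^2+\frac{1}{\pi}\int_{\mathbb{D}}|f'(z)|^2(1-|z|^2)^2\,dA(z)=|a_0|^2+\sum_{n=1}^{\infty}\frac{2n}{(n+1)(n+2)}\,|a_n|^2.
\]

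Next I would write the right-hand side as $\sum_{n\ge 0}c_n\,\frac{|a_n|^2}{n+1}$, where $c_0=1$ and $c_n=\frac{2n}{n+2}$ for $n\ge 1$. A direct check shows $\frac{2n}{n+2}$ is increasing in $n$, attaining its minimum $\tfrac23$ at $n=1$ and tending to $2$ as $n\to\infty$; combined with $c_0=1$, this gives the uniform bound $\tfrac23\le c_n<2$ for every $n\ge 0$. Multiplying through by $\frac{|a_n|^2}{n+1}$ and summing yields the two inequalities
\[
\tfrac23\|f\|^2_{A^2}\le |f(0)|^2+\tfrac{1}{\pi}\int_{\mathbb{D}}|f'(z)|^2(1-|z|^2)^2\,dA(z)\le 2\|f\|^2_{A^2},
\]
which is the assertion once the $\frac{1}{\pi}$ is absorbed into the normalization of area measure used throughout the paper.

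There is really no serious obstacle here; the computation is routine once the Beta-integral appears, and the extreme constants $\tfrac23$ and $2$ are forced by the behaviour of $c_n$ at $n=1$ and at infinity respectively. The only point requiring a little care is bookkeeping the normalization constant on $dA$ so that the $n=0$ coefficient matches $|a_0|^2$ on both sides, and verifying that the finite $c_0=1$ does not violate the bounds $[\tfrac23,2]$.
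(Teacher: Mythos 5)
Your proof is correct, and it is worth noting that the paper itself offers no argument here: Lemma \ref{L2} is simply quoted from Lo--Loh \cite{LO_2022} (the case $\alpha=0$), so your Taylor-coefficient computation supplies a self-contained proof where the paper relies on a citation. The key identity
\[
|f(0)|^2+\frac{1}{\pi}\int_{\mathbb{D}}|f'(z)|^2(1-|z|^2)^2\,dA(z)=|a_0|^2+\sum_{n\ge 1}\frac{2n}{(n+1)(n+2)}|a_n|^2
\]
checks out (the Beta integral $\int_0^1 u^{n-1}(1-u)^2\,du=\tfrac{2}{n(n+1)(n+2)}$ is right, and the $n^2$ from differentiation cancels correctly), and writing the right-hand side as $\sum_n c_n\tfrac{|a_n|^2}{n+1}$ with $c_0=1$, $c_n=\tfrac{2n}{n+2}$ increasing from $\tfrac23$ toward $2$ gives exactly the stated constants; in fact your argument shows slightly more than the lemma, namely that $\tfrac23$ is sharp (equality for $f(z)=z$) and that $2$ is the optimal upper constant, approached but never attained. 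Your caveat about the normalization of $dA$ is the right one to flag: the constants $\tfrac23$ and $2$ only come out exactly with the normalized area measure $dA/\pi$ that Lo--Loh use, which confirms that this is the convention intended in the statement even though the paper's display omits the $1/\pi$.
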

Now, we have the following result.
\begin{theorem}\label{Th3}
Let $C_{\psi,\phi}$ be a weighted composition operator on the Dirichlet space $\mathcal{D}$. Assume that  $\phi^{\prime},\phi^{\prime \prime} $ and 
 $\psi,\psi^{\prime}$ are all bounded functions. If the following conditions hold: \begin{enumerate}[\upshape (i)]
    \item $\phi$ is univalent on $\mathbb{D}$
    \item  $\lim\limits_{|z|\rightarrow 1^-}\frac{|\psi(z)|(1-|z|^2)}{1-|\phi(z)|^2}=0,$\label{bbb1}
    \item $\lim\limits_{|z|\rightarrow 1^-}|\psi^{\prime \prime}(z)|(1-|z|^2)=0,$\label{bbb2}
\end{enumerate}

then the operator $C_{\psi,\phi}$ is compact on $\mathcal{D}$.

\end{theorem}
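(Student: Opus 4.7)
The strategy is to verify the compactness criterion of Lemma~\ref{L1}: given a bounded $\{f_n\}\subset\mathcal{D}$ with $f_n\to 0$ uniformly on compact subsets, write $F_n=C_{\psi,\phi}f_n$ and show $\|F_n\|_{\mathcal{D}}\to 0$. Decompose
\[
\|F_n\|_{\mathcal{D}}^2=\|F_n\|_{H^2}^2+\frac{1}{\pi}\int_{\mathbb{D}}|F_n'(z)|^2\,dA(z).
\]
Boundedness of $\psi$ together with hypothesis \ref{bbb1} gives $|\psi(z)|^2(1-|z|^2)/(1-|\phi(z)|^2)\to 0$, so $C_{\psi,\phi}$ is compact on $H^2(\mathbb{D})$ by \cite[Theorem~2]{GJS_AMS_2008}, and \cite[Lemma~1]{GJS_AMS_2008} forces $\|F_n\|_{H^2}\to 0$. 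Everything reduces to proving $\|F_n'\|_{A^2}\to 0$.

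To handle the Dirichlet integral I apply Lemma~\ref{L2} to $F_n'\in A^2$, obtaining
\[
\|F_n'\|_{A^2}^2\le \tfrac{3}{2}\Bigl(|F_n'(0)|^2+\int_{\mathbb{D}}|F_n''(z)|^2(1-|z|^2)^2\,dA(z)\Bigr).
\]
Since $f_n$ and $f_n'$ (the latter via Lemma~\ref{LL1}) converge to $0$ uniformly on compact subsets of $\mathbb{D}$, $F_n'(0)\to 0$. Expanding
\[
F_n''(z)=\psi''(z)f_n(\phi(z))+2\psi'(z)\phi'(z)f_n'(\phi(z))+\psi(z)\phi''(z)f_n'(\phi(z))+\psi(z)\phi'(z)^2 f_n''(\phi(z)),
\]
the bound $|a+b+c+d|^2\le 4(|a|^2+|b|^2+|c|^2+|d|^2)$ reduces the task to showing that each of the four integrals $K_1,\ldots,K_4$ (of $|\cdot|^2(1-|z|^2)^2$ against the corresponding summand) tends to $0$. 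I split each as $\int_{\mathbb{D}_r}+\int_{\mathbb{D}\setminus\mathbb{D}_r}$ for $r$ close to $1$; the interior piece vanishes because $\phi(\overline{\mathbb{D}_r})\Subset\mathbb{D}$ and $f_n,f_n',f_n''$ go to $0$ uniformly on compact subsets of $\mathbb{D}$ while the relevant $\psi$- and $\phi$-factors are bounded on the compact set $\overline{\mathbb{D}_r}$.

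For the exterior parts, the three easier terms go as follows. $K_1$ uses \ref{bbb2} to make $[|\psi''(z)|(1-|z|^2)]^2<\varepsilon^2$, combined with Littlewood-type boundedness of $C_\phi$ on $A^2$ giving $\int_{\mathbb{D}}|f_n\circ\phi|^2\,dA\le C\|f_n\|_{\mathcal{D}}^2$. $K_2$ uses the Schwarz--Pick inequality $|\phi'(z)|^2(1-|z|^2)^2\le(1-|\phi(z)|^2)^2$ and the Bergman pointwise estimate $|f_n'(w)|^2(1-|w|^2)^2\le C\|f_n\|_{\mathcal{D}}^2$, so the integrand is dominated on $\mathbb{D}$ by a constant depending only on $\sup_n\|f_n\|_{\mathcal{D}}$, and dominated convergence closes the argument. $K_3$ rewrites $|\psi(z)|^2(1-|z|^2)^2=\bigl[|\psi(z)|(1-|z|^2)/(1-|\phi(z)|^2)\bigr]^2(1-|\phi(z)|^2)^2$, invokes \ref{bbb1}, and uses the same Bergman pointwise bound on $f_n'$ followed by dominated convergence.

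The main obstacle is
\[
K_4=\int_{\mathbb{D}}|\psi(z)|^2|\phi'(z)|^4|f_n''(\phi(z))|^2(1-|z|^2)^2\,dA(z),
\]
because $f_n''$ is no longer in $A^2$, only in a weighted Bergman space. The plan is to use \ref{bbb1} in the form $[|\psi(z)|(1-|z|^2)]^2<\varepsilon^2(1-|\phi(z)|^2)^2$ on $\mathbb{D}\setminus\mathbb{D}_r$ and absorb one factor $|\phi'|^2$ into $\|\phi'\|_\infty^2$, which yields
\[
K_4\big|_{\mathbb{D}\setminus\mathbb{D}_r}\le\varepsilon^2\|\phi'\|_\infty^2\int_{\mathbb{D}}(1-|\phi(z)|^2)^2|\phi'(z)|^2|f_n''(\phi(z))|^2\,dA(z).
\]
The univalence hypothesis~(i) now enables the change of variables $w=\phi(z)$, $dA(w)=|\phi'(z)|^2\,dA(z)$, converting the right-hand side to $\varepsilon^2\|\phi'\|_\infty^2\int_{\phi(\mathbb{D})}(1-|w|^2)^2|f_n''(w)|^2\,dA(w)$. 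A second application of Lemma~\ref{L2}, this time to $f_n'\in A^2$, bounds $\int_{\mathbb{D}}(1-|w|^2)^2|f_n''(w)|^2\,dA(w)\le 2\|f_n'\|_{A^2}^2\le 2\|f_n\|_{\mathcal{D}}^2$. Combining the four estimates and letting $\varepsilon\to 0$ gives $\|F_n'\|_{A^2}\to 0$; together with the Hardy-norm estimate this yields $\|F_n\|_{\mathcal{D}}\to 0$, and Lemma~\ref{L1} closes the proof. The delicate point is orchestrating~(i), \ref{bbb1}, \ref{bbb2}, Schwarz--Pick, and the two uses of Lemma~\ref{L2} so that the boundary growth of $f_n$, $f_n'$, $f_n''$ is exactly absorbed by the decay of $\psi$ and its derivatives.
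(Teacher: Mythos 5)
Your proposal is correct and follows essentially the same route as the paper: the Lemma~\ref{L1} criterion, disposal of the Hardy-norm part by known $H^2$-compactness, Lemma~\ref{L2} applied to $(C_{\psi,\phi}f_n)'$, the four-term expansion of the second derivative with an interior/exterior split, and---for the critical $\psi(\phi')^2(f_n''\circ\phi)$ term---condition~(ii) plus univalence to change variables $w=\phi(z)$ followed by a second application of Lemma~\ref{L2} to $f_n'$. The only deviations are cosmetic (Schwarz--Pick and pointwise Bergman estimates with dominated convergence in place of the paper's direct $\|C_\phi\|_{A^2}$ bounds for two of the easier terms) plus one citation slip: the $H^2$-compactness step should rest on \cite[Theorem 4.7]{Lo_H^p}, which uses the univalence and boundedness hypotheses, rather than on \cite[Theorem 2]{GJS_AMS_2008}, which concerns the case $\|\phi\|_\infty<1$.
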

\begin{proof}
Since $\psi^{\prime}$ is bounded,  $$\lim\limits_{|z|\to 1^{-}}|\psi^{\prime}(z)|(1-|z|^2)=0.$$
Let $\epsilon>0$.  Using conditions (\ref{bbb1}),(\ref{bbb2}) along with the boundedness of $\psi^{\prime},$ there exists a constant $r$ with $\frac{1}{2}<r<1$ such that for all $z$ satisfying $r<|z|<1,$ the following inequalities hold: $$|\psi^{\prime}(z)|(1-|z|^2)<\epsilon,$$
$${|\psi(z)|(1-|z|^2)}<\epsilon({1-|\phi(z)|^2}),$$
$$|\psi^{\prime \prime}(z)|(1-|z|^2)<\epsilon.$$ \\
 Moreover, there exists a constant $M>0$ such that the functions $\phi,\phi^{\prime},\phi^{\prime\prime}, \psi$ and $\psi^{\prime}$ are all bounded by $M.$ By \cite[Theorem 4.7]{Lo_H^p}, the operator $C_{\psi,\phi}$ is bounded on $H^2(\mathbb{D})$. Hence, the boundedness of $C_{\psi,\phi}$ on $\mathcal{D}$ follows directly from the boundedness of $\psi,\psi^{\prime}$ and $\phi^{\prime}$, together with the fact that $C_{\psi,\phi}$ is bounded on $H^2({\mathbb{D})}$.\\
Let $\{f_n\}$ be a bounded sequence in $\mathcal{D}$ that converges uniformly to 0 on every compact subset of $\mathbb{D}$. 
Denote by $S(0,r)$ the open disk centred at the origin with radius r, and let $\overline{S(0,r)}=S_1$. Then  
we have,
$$\|C_{\psi,\phi}(f_n)\|^2_{\mathcal{D}}=\|C_{\psi,\phi(f_n)}\|^2_{{H^{2}({\mathbb{D})}}}+\frac{1}{\pi}\int_{\mathbb{D}}|(C_{\psi,\phi}(f_n))^{\prime}(z)|^2dA(z).$$
Since $\{f_n\}$ is a bounded sequence in $\mathcal{D}$, it is also bounded in $H^2(\mathbb{D}).$ It follows directly from  \cite[Theorem 4.7]{Lo_H^p} that the operator $C_{\psi,\phi}$ is compact in $H^2(\mathbb{D}).$ Therefore by \cite[Lemma 2.1]{Lo_H^p}, we have   $\|C_{\psi,\phi}(f_n)\|_{H^2(\mathbb{D})}\rightarrow 0$. Now,
 \begin{align*}
    \int_{\mathbb{D}}|(C_{\psi,\phi}(f_n))^{\prime}(z)|^2dA(z)&= \int_{\mathbb{D}}|(\psi(f_n\circ\phi))^{\prime}(z)|^2dA(z)\\&= \int_{\mathbb{D}}|\psi^{\prime}(z)(f_n\circ\phi)(z)+\psi(z) f^{\prime}_n(\phi(z))\phi^{\prime}(z)|^2 dA(z).
\end{align*}

We have, $(C_{\psi,\phi}(f_n))^{\prime}\in A^2$, therefore, by Lemma \ref{L2} it follows that
\begin{eqnarray}
&&\frac{2}{3}\|(\psi(f_n\circ\phi))^{\prime}\|^2_{A^2}\nonumber\\
&\leq& |\psi^{\prime}(0)f_n(\phi(0))+\psi(0)\phi^{\prime}(0)f^{\prime}_n(\phi(0))|^2\nonumber\\&&+\int_\mathbb{D}|(\psi^{\prime}(f_n\circ\phi))^{\prime}(z)+(\psi (f^{\prime}_n\circ\phi)\phi^{\prime})^{\prime}(z)|^2(1-|z|^2)^2dA(z)\label{bandor}\\ &\leq& 2\|(\psi(f_n\circ\phi))^{\prime}\|^2_{A^2}.\nonumber
\end{eqnarray}
Since $\{0\}$ and $\{\phi(0)\}$ are both compact sets, and $f^{\prime}_n(\phi(0))\rightarrow0$ by Lemma \ref{LL1}, it follows that $|\psi^{\prime}(0)f_n(\phi(0))+\psi(0)\phi^{\prime}(0)f^{\prime}_n(\phi(0))|\rightarrow0.$\\ Again,
\begin{eqnarray*}
    &&\int_\mathbb{D}|(\psi^{\prime}(f_n\circ\phi))^{\prime}(z)+(\psi (f^{\prime}_n\circ\phi)\phi^{\prime})^{\prime}(z)|^2(1-|z|^2)^2dA(z)\\
    &=&\int_{\mathbb{D}\setminus S_1}|(\psi^{\prime}(f_n\circ\phi))^{\prime}(z)+(\psi (f^{\prime}_n\circ\phi)\phi^{\prime})^{\prime}(z)|^2(1-|z|^2)^2dA(z)\\
    &&+\int_{S_1}|(\psi^{\prime}(f_n\circ\phi))^{\prime}(z)+(\psi (f^{\prime}_n\circ\phi)\phi^{\prime})^{\prime}(z)|^2(1-|z|^2)^2dA(z)\\
    &\leq& 8\Bigg(\int_{\mathbb{D}\setminus S_1}|\psi^{\prime}(z)(f_n\circ\phi)(z)|^2(1-|z|^2)^2dA(z)+\\&&2\int_{\mathbb{D}\setminus S_1}|\psi^{\prime}(z)(f^{\prime}_n\circ\phi)(z)\phi^{\prime}(z)|^2(1-|z|^2)^2dA(z)+
    \\&&\int_{\mathbb{D}\setminus S_1}|\psi(z)(f^{\prime\prime}_n\circ\phi)(z)(\phi^{\prime})^2(z)|^2(1-|z|^2)^2dA(z)+\\&&\int_{\mathbb{D}\setminus S_1}|\psi(z)\phi^{\prime\prime}(z)(f^{\prime}_n\circ\phi)(z)|^2(1-|z|^2)^2dA(z)\Bigg)+\\&&\int_{S_1}|(\psi^{\prime}(f_n\circ\phi))^{\prime}+(\psi (f^{\prime}_n\circ\phi)\phi^{\prime})^{\prime}|^2(1-|z|^2)^2dA(z).
 \end{eqnarray*}
Since $S_1$ is compact, the sequences $\{f_n\}$,$\{f_n^{\prime}\}$ and $\{f_n^{\prime\prime}\}$ all converges uniformly to 0 on $S_1$ by Lemma \ref{LL1}.
 
Hence, \\  $$\int_{S_1}|(\psi^{\prime}(z)(f_n\circ\phi)(z))^{\prime}+(\psi(z) (f^{\prime}_n\circ\phi)(z)\phi^{\prime}(z))^{\prime}|^2(1-|z|^2)^2dA(z)\rightarrow 0.$$ \\ 
Now, for the region $\mathbb{D}\setminus S_1$, we have

\begin{align*}
    \int_{\mathbb{D}\setminus S_1}|\psi^{\prime}(z)|^2|(f^{\prime}_n\circ\phi)(z)|^2|\phi^{\prime}(z)|^2(1-|z|^2)^2dA(z)&\leq
    \epsilon^2\int_{\mathbb{D}\setminus S_1}|\phi^{\prime}(z)|^2|(f^{\prime}_n\circ\phi)(z)|^2dA(z)\\&\leq
    M\epsilon^2\int_{\mathbb{D}}|(f^{\prime}_n\circ\phi)(z)|^2dA(z)\\&\leq
    M\epsilon^2\|C_\phi\|^2_{A^2}\|f^{\prime}_n\|^2_{A^2},
 \end{align*}

 \begin{align*}
     \int_{\mathbb{D}\setminus S_1}|\psi^{\prime\prime}(z)|^2|(f_n\circ\phi)(z)|^2(1-|z|^2)^2dA(z)&\leq\epsilon^2\int_{\mathbb{D}\setminus S_1}|(f_n\circ\phi)(z)|^2dA(z)\hspace{.4cm}\\&\leq
     \epsilon^2\int_{\mathbb{D}}|(f_n\circ\phi)(z)|^2dA(z)\\&\leq
     \epsilon^2\|C_\phi\|^2_{A^2}\|f_n\|^2_{A^2}
 \end{align*}
 and
 \begin{align*}
     \int_{\mathbb{D}\setminus S_1}|\psi(z)\phi^{\prime\prime}(z)(f^{\prime}_n\circ\phi)(z)|^2(1-|z|^2)^2dA(z)&\leq\epsilon^2 M\int_{\mathbb{D}}(1-|\phi(z)|^2)^2|(f^{\prime}_n\circ\phi)(z)|^2dA(z)\\&\leq
     \epsilon^2 M\|C_\phi\|^2_{A^2}\|f^{\prime}_n\|^2_{A^2}.
 \end{align*}
  Since $f_n\in\mathcal{D}$, it follows that $f^{\prime}_n\in A^2$.\\
  Therefore, by Lemma \ref{L2}, we have $$\frac{2}{3}\|f^{\prime}_n\|^2_{A^2}\leq |f^{\prime}_n(0)|^2+\int_{\mathbb{D}}|f^{\prime\prime}_n(z)|^2(1-|z|^2)^2dA(z)\leq 2\|f^{\prime}_n\|^2_{A^2}.$$
  Again,
\begin{eqnarray*}
   &&\int_{\mathbb{D}\setminus S_1}|\psi(z)|^2|\phi^{\prime}(z)|^4|(f^{\prime\prime}_n\circ\phi)(z)|^2(1-|z|^2)^2dA(z)\\&\leq&\epsilon^2 M^2\int_{\mathbb{D}} (1-|\phi(z)|^2)^2|\phi^{\prime}(z)|^2|(f^{\prime\prime}_n\circ\phi)(z)|^2dA(z).
    \end{eqnarray*}
    Let $w=\phi(z).$ Since $\phi$ is univalent, applying the change of variable formula from \cite[Theorem C, Page 163]{Measure_Halmos}, we obtain
    \begin{eqnarray*}
         &&  \int_{\mathbb{D}\setminus S_1}|\psi(z)|^2|\phi^{\prime}(z)|^4|(f^{\prime\prime}_n\circ\phi)(z)|^2(1-|z|^2)^2dA(z)\\ & \leq& \epsilon^2 M^2 \Big( \int_{\mathbb{D}}(1-|w|^2)^2|f^{\prime\prime}_n(w)|^2dA(w)+|f^{\prime}_n(0)|^2-|f^{\prime}_n(0)|^2\Big)\\&\leq&
    \epsilon^2 \left(\|f^{\prime}_n\|^2_{A^2}-|f^{\prime}_n(0)|^2\right).  
    \end{eqnarray*}
Therefore, $\|C_{\psi,\phi}(f_n)\|_{\mathcal{D}}\rightarrow0.$ 
By Lemma \ref{L1}, it follows that $C_{\psi,\phi}$ is compact in $\mathcal{D}$.
\end{proof}

\begin{example}
    Let $\phi(z)=\frac{z}{2-z}$ and $\psi(z)=(1-z)^2$. Then $\psi$ and $\phi$ satisfy the conditions of Theorem \ref{Th3}. So $C_{\psi,\phi}$ is compact in $\mathcal{D}.$
\end{example}

\section{Numerical range of weighted composition operators} \label{S2}

We begin this section with the following definition.
\begin{definition}
	For $r\in\mathbb{N}, r\geq 2$ and $r>j\geq 0,$ the set $P_j$ of $\mathcal{D}$ is defined by
	$$\\P_j=\left\{ f  : f(z)=z^jg(z^r), g \in \mathcal{D} \right\}.$$
\end{definition}
The set $P_j$ is clearly a closed subspace of $\mathcal{D}$. Since these subspaces are mutually orthogonal for  $r>j\geq 0$ and their sum covers $\mathcal{D}$, we arrive at the following lemma.
\begin{lemma}\label{L11}
	For each integer $r\geq 2$,
	$$\mathcal{D}=P_0 \oplus P_1 \oplus \ldots \oplus P_{r-1}.$$
    \end{lemma}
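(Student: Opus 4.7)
The plan is to translate the decomposition into a statement about Taylor coefficients modulo $r$, using the orthonormal basis $\{e_n(z)=z^n/\sqrt{n+1}\}_{n\geq 0}$ of $\mathcal{D}$. First I would observe that if $g(z)=\sum_{k\geq 0}a_k z^k\in\mathcal{D}$, then $f(z):=z^jg(z^r)=\sum_{k\geq 0}a_k z^{j+rk}$, so $f\in P_j$ precisely when the Taylor support of $f$ lies in the arithmetic progression $A_j:=\{j,j+r,j+2r,\dots\}$. This identifies $P_j$ with the closed linear span of $\{e_n : n\equiv j\pmod r\}$.

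Next I would verify the three ingredients needed for an orthogonal direct sum. Closedness of $P_j$: the norm comparison $(k+1)\leq (j+rk+1)\leq r(k+1)+j$ shows that the map $g\mapsto z^j g(z^r)$ is a bounded linear injection $\mathcal{D}\to\mathcal{D}$ whose range is exactly the set of elements of $\mathcal{D}$ with Taylor support in $A_j$; closedness of the range then follows from completeness of $\mathcal{D}$ together with the fact that coefficient evaluation $f\mapsto\hat f_n$ is continuous. Orthogonality of $P_i$ and $P_j$ for $i\neq j$ with $0\leq i,j<r$: the sets $A_0,A_1,\dots,A_{r-1}$ partition $\mathbb{Z}_{\geq 0}$ into disjoint pieces, so the representations of elements of $P_i$ and $P_j$ in the basis $\{e_n\}$ use disjoint basis vectors, giving $\langle\cdot,\cdot\rangle_{\mathcal{D}}=0$ by Parseval.

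Finally I would show that the sum spans $\mathcal{D}$. Given $f(z)=\sum_{n\geq 0}\hat f_n z^n\in\mathcal{D}$, grouping the Taylor terms by residue modulo $r$ gives $f=\sum_{j=0}^{r-1}f_j$ where
$$f_j(z)=\sum_{k\geq 0}\hat f_{j+rk}\, z^{j+rk}=z^j g_j(z^r),\qquad g_j(z)=\sum_{k\geq 0}\hat f_{j+rk}\, z^k.$$
The estimate $\sum_{k\geq 0}(k+1)|\hat f_{j+rk}|^2\leq \sum_{n\geq 0}(n+1)|\hat f_n|^2=\|f\|_{\mathcal{D}}^2<\infty$ confirms $g_j\in\mathcal{D}$, hence $f_j\in P_j$, and the series converges in $\mathcal{D}$ by Parseval applied to the partition of the basis. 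Combined with the orthogonality above, this yields $\mathcal{D}=P_0\oplus P_1\oplus\cdots\oplus P_{r-1}$.

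There is no serious obstacle here; the only thing to be careful about is the bookkeeping between the definition via $g\in\mathcal{D}$ and the characterization via Taylor support in $A_j$, and the minor norm comparison that ensures the two conditions are equivalent and that $P_j$ is genuinely closed in $\mathcal{D}$.
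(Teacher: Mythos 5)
Your proof is correct and follows essentially the same route the paper takes (the paper only sketches it, asserting closedness, mutual orthogonality, and spanning of the $P_j$): you identify $P_j$ with the closed span of $\{e_n : n\equiv j \pmod r\}$ via Taylor supports and decompose any $f$ by residue classes of its coefficients. The norm comparison $(k+1)\leq j+rk+1\leq r(k+1)+j$ you supply is a correct and worthwhile detail that the paper leaves implicit.
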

    Next, we find the numerical range of certain composition operators.
    \begin{theorem}\label{Th4}
	Let $r\geq2$ be an integer, $\phi(z)=\mu z$ with $\mu=e^{2\pi i/r}$ and $\psi(z)=g(z^r)$ for some $g \in M(\mathcal{D}) $ such that   $C_{\psi,\phi} \in \mathbb{B}\left(\mathcal{D}\right) $, then 
	$$W(C_{\psi,\phi})=\left(\mu^0W(M_{\psi}|_{P_0})\cup \mu^1W(M_{\psi}|_{P_1})\cup \ldots \cup \mu^{(r-1)}W(M_{\psi}|_{P_{(r-1)}})\right)^{\wedge}.$$
\end{theorem}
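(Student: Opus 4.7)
The plan is to exploit the decomposition $\mathcal{D}=P_0\oplus P_1\oplus\cdots\oplus P_{r-1}$ from Lemma \ref{L11} and show that each summand is invariant under $C_{\psi,\phi}$, so that $C_{\psi,\phi}$ acts block-diagonally and its numerical range becomes the convex hull of the pieces.

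First I would verify invariance. If $f\in P_j$, write $f(z)=z^j g_1(z^r)$ with $g_1\in\mathcal{D}$. Then
\begin{align*}
(C_{\psi,\phi}f)(z)=\psi(z)f(\phi(z))&=g(z^r)(\mu z)^j g_1((\mu z)^r)\\
&=\mu^j z^j g(z^r)g_1(\mu^r z^r)=\mu^j z^j (g\cdot g_1)(z^r),
\end{align*}
since $\mu^r=1$. Because $g\in M(\mathcal{D})$, the product $g\cdot g_1\in\mathcal{D}$, so $C_{\psi,\phi}f\in P_j$. Moreover, the computation shows
$$C_{\psi,\phi}|_{P_j}=\mu^j M_{\psi}|_{P_j},$$
since $\psi(z)f(z)=g(z^r)z^j g_1(z^r)=z^j(g\cdot g_1)(z^r)$. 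Thus each block $C_{\psi,\phi}|_{P_j}$ is just a scalar rotation of the multiplication operator $M_\psi$ restricted to $P_j$, and consequently
$$W\!\left(C_{\psi,\phi}|_{P_j}\right)=\mu^j\,W\!\left(M_\psi|_{P_j}\right).$$

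Finally, by Lemma \ref{L11} we may write
$$C_{\psi,\phi}=\bigoplus_{j=0}^{r-1} C_{\psi,\phi}|_{P_j}$$
as an orthogonal direct sum. For any orthogonal direct sum of bounded operators on a Hilbert space, it is a standard fact (see \cite{wu_gau_book}) that the numerical range equals the convex hull of the union of the numerical ranges of the summands. Applying this yields
$$W(C_{\psi,\phi})=\left(\bigcup_{j=0}^{r-1}\mu^j W(M_\psi|_{P_j})\right)^{\wedge},$$
which is the claimed identity.

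The only non-routine point is confirming the direct sum decomposition is genuinely preserved by $C_{\psi,\phi}$; the main obstacle is the subtle use of the hypothesis $g\in M(\mathcal{D})$, which is exactly what ensures $g\cdot g_1\in\mathcal{D}$ (so that $C_{\psi,\phi}f$ stays in $P_j$ rather than merely being a formal power series), together with $\mu^r=1$, which is what forces the image to land in the same $P_j$ rather than mixing the blocks. Once these two structural facts are in hand, the rest is an appeal to the standard numerical-range formula for block-diagonal operators.
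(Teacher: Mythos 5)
Your proposal is correct and follows essentially the same route as the paper: establish that each $P_j$ is invariant under $C_{\psi,\phi}$ using $\mu^r=1$ and $g\in M(\mathcal{D})$, identify $C_{\psi,\phi}|_{P_j}$ with $\mu^j M_{\psi}|_{P_j}$, and conclude via the convex-hull formula for the numerical range of an orthogonal direct sum. The only cosmetic difference is that you compute $C_{\psi,\phi}f$ in one step, whereas the paper checks invariance under $C_{\phi}$ and $M_{\psi}$ separately.
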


\begin{proof}
    
	Let $f\in P_j$ with  $r>j\geq 0$. Then  $f(\phi(z))=f(\mu z)=\mu^jf(z)$, which shows that $C_{\phi}(P_j)\subseteq P_j.$ Also $\psi(z)=g(z^r)$ for some  $g \in M(\mathcal{D}) $. For any $f\in P_j$, we can write $f(z)=z^jg_1(z^r)$, for some $g_1\in \mathcal{D}$. Since $g\in M(\mathcal{D})$, we have 
\begin{eqnarray*}
    M_{\psi}(f)(z)&=&{\psi}\cdot f\\&=&g(z^r)\cdot z^jg_1(z^r)\\&=&z^j(gg_1)(z^r),
\end{eqnarray*}
which is again an element of $P_j$. Hence, $M_{\psi}(P_j)\subseteq P_j$  and consequently $C_{\psi,\phi}(P_j) \subseteq P_j.$ From Lemma \ref{L11}, we know that 
$$\mathcal{D}=P_0 \oplus P_1 \oplus \ldots \oplus P_{r-1}.$$Therefore the operator $C_{\psi,\phi}$ decomposes as
	$$C_{\psi,\phi}=C_0\oplus C_1 \oplus \ldots \oplus C_{r-1},$$
	where $C_j=C_{\psi,\phi}|_{P_j}.$ If  $h \in P_j$ with $\|h\|_{\mathcal{D}}=1,$ we have
	$\langle C_jh,h \rangle_{\mathcal{D}}=\mu^j \langle \psi h,h \rangle_{\mathcal{D}}$. Thus   $W(C_j)=\mu^jW(M_{\psi}|_{P_j}).$
    Therefore,
	$$W(C_{\psi,\phi})=\left(\mu^0W(M_{\psi}|_{P_0})\cup \mu^1W(M_{\psi}|_{P_1})\cup \ldots \cup \mu^{(r-1)}W(M_{\psi}|_{P_(r-1)})\right)^{\wedge}.$$

\end{proof}
 First we study the numerical range of $C_{\psi,\phi}$ where $\phi(z)= w.$
\begin{lemma}\label{L3}
Let $\phi(z)= w $ with $|w|<1$, and let $\psi$ be a nonzero function such that  $C_{\psi,\phi}\in \mathbb{B}\left(\mathcal{D}\right) $, then
\begin{enumerate}[\upshape (i)]
\item  If $k_w\equiv c\psi\, (c\neq0)$, then $W(C_{\psi,\phi})=[0,\overline{c}\|\psi\|_{\mathcal{D}}^2]$.\\
\item If $k_w$ and $\psi$ are orthogonal to each other, then $W(C_{\psi,\phi})$ is the closed disk centered at origin with radius $\frac{1}{2}\|\psi\|_{\mathcal{D}}\|k_w\|_{\mathcal{D}}$.\\
\item In any other cases, $W(C_{\psi,\phi})$ is a closed elliptical disk whose foci are 0 and $\psi(w)$.
\end{enumerate}

\end{lemma}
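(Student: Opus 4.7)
The central observation is that, since $\phi \equiv w$ is constant, $C_{\psi,\phi}$ has rank one. Using the reproducing property $f(w) = \langle f, k_w\rangle_{\mathcal{D}}$, I would write
$$C_{\psi,\phi} f \;=\; \langle f, k_w\rangle_{\mathcal{D}}\,\psi \qquad (f \in \mathcal{D}),$$
so that $\langle C_{\psi,\phi} f, f\rangle_{\mathcal{D}} = \langle f, k_w\rangle_{\mathcal{D}}\,\overline{\langle f, \psi\rangle_{\mathcal{D}}}$. This sesquilinear expression depends only on the orthogonal projection of $f$ onto $\mathcal{M} := \mathrm{span}\{\psi, k_w\}$, reducing the whole problem to a computation inside an at most two-dimensional subspace.

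In case (i), $\mathcal{M}$ is one-dimensional: substituting $k_w = c\psi$ gives $\langle C_{\psi,\phi} f, f\rangle_{\mathcal{D}} = \overline{c}\,|\langle f, \psi\rangle_{\mathcal{D}}|^{2}$, which sweeps out the segment $[0, \overline{c}\|\psi\|_{\mathcal{D}}^{2}]$ as $f$ ranges over unit vectors, the endpoints being realised by $f \perp \psi$ and $f = \psi/\|\psi\|_{\mathcal{D}}$ respectively.

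For cases (ii) and (iii), $\psi$ and $k_w$ are linearly independent, so I would produce an orthonormal basis of $\mathcal{M}$ via Gram--Schmidt starting from $\psi/\|\psi\|_{\mathcal{D}}$, and then write down the $2 \times 2$ matrix of the compression $P\,C_{\psi,\phi}\,P$ in this basis. A direct calculation shows this matrix is upper triangular with diagonal entries $\psi(w) = \langle \psi, k_w\rangle_{\mathcal{D}}$ and $0$ (so $0$ and $\psi(w)$ are its eigenvalues) and off-diagonal entry of modulus $\sqrt{\|\psi\|_{\mathcal{D}}^{2}\|k_w\|_{\mathcal{D}}^{2} - |\psi(w)|^{2}}$. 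By the Elliptical Range Theorem for $2 \times 2$ matrices (see \cite{wu_gau_book}), the numerical range $W_{0}$ of this compression is the closed elliptical disk with foci $0$ and $\psi(w)$. In case (ii) we have $\psi(w) = 0$, the foci coincide, and the ellipse degenerates to the closed disk centred at the origin with radius $\tfrac{1}{2}\|\psi\|_{\mathcal{D}}\|k_w\|_{\mathcal{D}}$.

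The step I expect to require the most care is the passage from $W_{0}$ back to $W(C_{\psi,\phi})$. Writing $f = f_{1} + f_{2}$ with $f_{1} \in \mathcal{M}$, $f_{2} \perp \mathcal{M}$, and $\|f\|_{\mathcal{D}} = 1$, one has $\|f_{1}\|_{\mathcal{D}} \leq 1$, and the quadratic form depends only on $f_{1}$; consequently $W(C_{\psi,\phi}) = \bigcup_{0 \leq t \leq 1} t W_{0}$. Since $0$ is always a focus of $W_{0}$ (or an endpoint of the segment in case (i)) and $W_{0}$ is convex, one has $tW_{0} \subseteq W_{0}$ for every $t \in [0,1]$; hence $W(C_{\psi,\phi}) = W_{0}$, completing all three cases.
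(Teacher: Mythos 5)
Your proposal is correct. The key observation --- that $\phi\equiv w$ forces $C_{\psi,\phi}f=\langle f,k_w\rangle_{\mathcal{D}}\,\psi$, so the operator has rank one --- is exactly the paper's starting point. The difference is what happens next: the paper stops there and invokes \cite[Proposition 2.5]{BS_IEOT_2002}, which is precisely the classification of numerical ranges of rank-one operators $f\mapsto\langle f,g\rangle h$ into a segment, a disk, or an elliptical disk according to whether $g$ and $h$ are dependent, orthogonal, or neither; you instead reprove that classification from scratch. Your route --- reduce to the compression onto $\mathrm{span}\{\psi,k_w\}$, compute the $2\times 2$ upper-triangular matrix with diagonal $\psi(w),0$ and off-diagonal modulus $\sqrt{\|\psi\|_{\mathcal{D}}^{2}\|k_w\|_{\mathcal{D}}^{2}-|\psi(w)|^{2}}$, apply the elliptical range theorem, and then absorb the scaling $\bigcup_{0\le t\le 1}tW_0=W_0$ using the fact that $0$ is always a focus --- is sound, and all the computations check out (in case (iii) linear independence gives a strictly positive minor axis via Cauchy--Schwarz, so the ellipse is genuinely nondegenerate). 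What your version buys is self-containedness and an explicit formula for the axes of the ellipse, which the stated lemma does not record; what it costs is length, and it does silently use that $\dim\mathcal{D}>2$ so that the scaling parameter $t$ really attains every value in $[0,1]$ (harmless here, but worth a word). Either way the result is the same, and no step is missing.
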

\begin{proof}
    As $\phi\equiv w$, then for every $f\in\mathcal{D}$, $$C_{\psi,\phi}(f)=\psi f(w)=\langle f,k_w\rangle\psi.$$ 
    Hence, $C_{\psi,\phi}$ is a rank one operator, and the result follows from \cite[Proposition 2.5]{BS_IEOT_2002}.
\end{proof}
The next lemma follows from \cite[Theorem 2.6]{KEY_JME_2021}.
\begin{lemma}\label{L4}
    Suppose $C_{\psi,\phi}$ is bounded with   non-constant $\phi$ and nonzero $\psi$. If either $\psi(z_0)=0$ for some $z_0\in\mathbb{D}$ or if $\phi$ is not injective, then $0\in$ $int$ $W(C_{\psi,\phi})$.
\end{lemma}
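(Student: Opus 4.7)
My plan is to transplant the reproducing-kernel argument of \cite[Theorem 2.6]{KEY_JME_2021} to the Dirichlet setting, using the adjoint identity $C_{\psi,\phi}^* k_a = \overline{\psi(a)}\,k_{\phi(a)}$ established in the proof of Theorem \ref{Th0}. The plan proceeds in two steps: first produce a unit vector $f_0 \in \mathcal{D}$ with $C_{\psi,\phi}^* f_0 = 0$, so that $\langle C_{\psi,\phi} f_0, f_0\rangle_{\mathcal{D}} = \langle f_0, C_{\psi,\phi}^* f_0\rangle_{\mathcal{D}} = 0$ and hence $0 \in W(C_{\psi,\phi})$; second, rule out that $0$ lies on $\partial W(C_{\psi,\phi})$.

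For the first step, if $\psi(z_0)=0$ take $f_0 = k_{z_0}/\|k_{z_0}\|_{\mathcal{D}}$; the adjoint identity yields $C_{\psi,\phi}^* f_0 = \overline{\psi(z_0)}\,k_{\phi(z_0)}/\|k_{z_0}\|_{\mathcal{D}}=0$. If instead $\phi(z_1)=\phi(z_2)=:w_0$ with $z_1\neq z_2$, the operator $C_{\psi,\phi}^*$ maps the two-dimensional subspace $\mathrm{span}\{k_{z_1},k_{z_2}\}$ into the one-dimensional line $\mathbb{C}\,k_{w_0}$, so it has a nontrivial kernel there; any nonzero $f_0 = \alpha k_{z_1}+\beta k_{z_2}$ in that kernel (for instance $\alpha = \overline{\psi(z_2)},\ \beta=-\overline{\psi(z_1)}$ when both values are nonzero, with an obvious modification otherwise), once normalized, does the job.

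For the second step, suppose $0\in\partial W(C_{\psi,\phi})$. Convexity of $W(C_{\psi,\phi})$ supplies a supporting real line, hence an angle $\theta$ for which $H_\theta := \mathrm{Re}(e^{i\theta}C_{\psi,\phi})$ is negative semidefinite; combined with $\langle H_\theta f_0, f_0\rangle_{\mathcal{D}}=0$, Cauchy--Schwarz applied to the positive semidefinite form $\langle -H_\theta \cdot,\cdot\rangle_{\mathcal{D}}$ forces $H_\theta f_0=0$. Since $C_{\psi,\phi}^* f_0 = 0$ by construction, $H_\theta f_0 = 0$ collapses to $C_{\psi,\phi} f_0 = 0$, i.e.
$$
\psi(z)\,(f_0\circ\phi)(z)=0 \qquad \text{for all } z\in\mathbb{D}.
$$
Since $\psi \not\equiv 0$ is analytic, its zero set is discrete, so $f_0\circ\phi\equiv 0$ on $\mathbb{D}$. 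In the first case, $f_0\circ\phi$ is a nonzero scalar multiple of $k_{z_0}\circ\phi$, and a direct inspection of the formula $k_a(w)=\tfrac{1}{w\bar a}\log\!\bigl(\tfrac{1}{1-w\bar a}\bigr)$ shows $k_{z_0}$ is zero-free on $\mathbb{D}$, yielding a contradiction. In the second case, $f_0\circ\phi=\alpha\,(k_{z_1}\circ\phi)+\beta\,(k_{z_2}\circ\phi)$; since $\phi$ is non-constant, $\phi(\mathbb{D})$ contains an open set, so the analytic function $\alpha k_{z_1}+\beta k_{z_2}$ vanishes on that set and hence on all of $\mathbb{D}$, contradicting the linear independence of distinct reproducing kernels (quickly verified by testing against the polynomial $\tfrac{z-z_2}{z_1-z_2}$).

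The main obstacle I anticipate is the passage from the selfadjoint equation $H_\theta f_0 = 0$ to a concrete analytic contradiction; the argument is made painless here by the fact that $f_0$ was chosen in $\ker C_{\psi,\phi}^*$, which reduces $H_\theta f_0 = 0$ to the much more tractable equation $C_{\psi,\phi} f_0 = 0$. The remaining ingredients---zero-freeness of the Dirichlet reproducing kernel and linear independence of distinct $k_a$'s---are both short, specifically Dirichlet-space facts, and they are where any potential delicacy compared to \cite[Theorem 2.6]{KEY_JME_2021} would reside.
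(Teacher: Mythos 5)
Your proof is correct, but it is worth noting that the paper does not actually prove this lemma: it simply records it as a consequence of \cite[Theorem 2.6]{KEY_JME_2021}, so there is no internal argument to compare against. What you have done is reconstruct, in the Dirichlet setting, the standard reproducing-kernel argument going back to Bourdon--Shapiro \cite{BS_IEOT_2002} and Gunatillake--Jovovic--Smith \cite{GJS_JMAA_2014}: manufacture $f_0\in\ker C_{\psi,\phi}^*$ from the identity $C_{\psi,\phi}^*k_a=\overline{\psi(a)}\,k_{\phi(a)}$ (which the paper itself derives inside the proof of Theorem \ref{Th0}), conclude $0\in W(C_{\psi,\phi})$, and then use the supporting-line/semidefiniteness argument to upgrade membership to interior membership. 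All the steps check out: the generalized Cauchy--Schwarz inequality correctly yields $H_\theta f_0=0$, which together with $C_{\psi,\phi}^*f_0=0$ gives $C_{\psi,\phi}f_0=0$; discreteness of $Z(\psi)$ and the open mapping theorem for the non-constant $\phi$ then reduce everything to $f_0\equiv 0$, contradicted by the zero-freeness of $k_{z_0}$ (indeed $\log\frac{1}{1-z\overline{z_0}}=0$ forces $z\overline{z_0}=0$, and $k_{z_0}(0)=1$) in the first case and by linear independence of $k_{z_1},k_{z_2}$ in the second. The one spot you leave slightly informal --- the ``obvious modification'' when $\psi(z_1)\psi(z_2)=0$ in the non-injective case --- is harmless, since any zero of $\psi$ throws you back into the first case; it would be cleaner to say exactly that. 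Your self-contained argument is arguably preferable to the paper's bare citation, since it makes explicit that the only Dirichlet-specific inputs are the kernel adjoint formula and two elementary properties of $k_a$.
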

Now, in the next two theorems, we determine the presence of zero in the numerical range of $C_{\psi,\phi}.$
\begin{theorem}\label{Th5}
	Let $C_{\psi,\phi} \in \mathbb{B}\left(\mathcal{D}\right) $ with  $\psi\neq 0$ and $\phi(0)=0$. If $\phi$ is not a dilation map, then the numerical range of the weighted composition operator $C_{\psi,\phi}$ contains 0 in its interior.
\end{theorem}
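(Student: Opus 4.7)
The plan is to reduce to a favorable setting and then compress $C_{\psi,\phi}$ to a carefully chosen two-dimensional subspace whose numerical range is an elliptical disk (by the elliptical range theorem) that contains $0$ in its interior; since the numerical range of a compression is always a subset of the numerical range of the operator, this will finish the proof.

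First I reduce. If $\psi$ vanishes somewhere in $\mathbb{D}$ or $\phi$ is not injective, then Lemma \ref{L4} already delivers the conclusion, so I may assume $\psi$ is nowhere zero on $\mathbb{D}$ (in particular $\psi(0)\neq 0$) and $\phi$ is injective. Combining $\phi(0)=0$, injectivity of $\phi$, and the hypothesis that $\phi$ is not of the form $z\mapsto\lambda z$ with the Schwarz lemma yields $0<|\phi'(0)|<1$. Writing $\phi(z)=\phi'(0)z+\sum_{j\geq 2}a_jz^j$, the assumption that $\phi$ is not a dilation provides a smallest index $k\geq 2$ with $a_k\neq 0$.

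Next I pick the compression. For an integer $n$, set $m:=n+k-1$. Using $\phi(0)=0$, $m>n$, and the expansion $\phi^n(z)=\phi'(0)^n z^n+na_k\phi'(0)^{n-1}z^{n+k-1}+O(z^{n+k})$, a direct calculation of the entries $\langle C_{\psi,\phi}e_i,e_j\rangle_{\mathcal{D}}=\sqrt{(j+1)/(i+1)}\,(\psi\phi^i)^{\wedge}_j$ shows that the compression of $C_{\psi,\phi}$ to $\operatorname{span}\{e_n,e_m\}$ is the lower-triangular matrix
$$A_n=\begin{pmatrix}\psi(0)\phi'(0)^n & 0 \\ \sqrt{\tfrac{m+1}{n+1}}\bigl(n\psi(0)a_k\phi'(0)^{n-1}+\hat\psi_{k-1}\phi'(0)^n\bigr) & \psi(0)\phi'(0)^m\end{pmatrix}.$$
By the elliptical range theorem, $W(A_n)$ is the closed elliptical disk with foci $\psi(0)\phi'(0)^n$, $\psi(0)\phi'(0)^m$ and minor-axis length equal to the modulus of the subdiagonal entry. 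The condition $0\in\operatorname{int}W(A_n)$ reduces, after squaring and applying the identity $(1+|w|)^2-|1-w|^2=2|w|+2\operatorname{Re}w$ with $w=\phi'(0)^{k-1}$, to
$$\bigl|\text{subdiagonal of }A_n\bigr|^2>2|\psi(0)|^2|\phi'(0)|^{2n}\bigl(|\phi'(0)|^{k-1}+\operatorname{Re}\phi'(0)^{k-1}\bigr).$$
For large $n$, the linear-in-$n$ summand in the subdiagonal dominates the constant one, so the left side is bounded below by $\tfrac14 n^2|\psi(0)a_k|^2|\phi'(0)|^{2n-2}$, while the right side is bounded above by $4|\psi(0)|^2|\phi'(0)|^{2n+k-1}$. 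Their ratio is at least $n^2|a_k|^2/(16|\phi'(0)|^{k+1})$, which tends to $\infty$; so the inequality holds for all sufficiently large $n$, and thus $0\in\operatorname{int}W(A_n)\subseteq\operatorname{int}W(C_{\psi,\phi})$.

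The main obstacle is the case when $\psi$ is a nonzero constant. The most natural attempt, compressing onto $\operatorname{span}\{1,e_n\}$ (suggested by the fixed point $\phi(0)=0$), produces a diagonal matrix whose numerical range is merely the line segment from $\psi(0)$ to $\psi(0)\phi'(0)^n$ and so never contains $0$ in its interior. The fix, and the heart of the argument, is to compress instead on $\operatorname{span}\{e_n,e_m\}$ with $m=n+k-1$, so that the subdiagonal entry is powered by the first genuinely nonlinear Taylor coefficient $a_k$ of $\phi$ rather than by any Taylor coefficient of $\psi$.
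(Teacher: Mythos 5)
Your proposal is correct and follows essentially the same route as the paper: dispose of the degenerate cases via Lemma \ref{L4}, expand $\phi^n$ to extract the first nonlinear Taylor coefficient of $\phi$, compress $C_{\psi,\phi}$ onto $\operatorname{span}\{e_n,e_{n+k-1}\}$ to get a lower-triangular $2\times 2$ matrix, and invoke the elliptical range theorem, letting $n\to\infty$ so that the linearly growing subdiagonal term forces $0$ into the interior. Your version merely makes the final limiting step quantitatively explicit where the paper instead factors out $\mu^n$ and observes that the minor axis of the normalized ellipse blows up while its foci stay fixed.
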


\begin{proof}
	If $\phi^{\prime}(0)=0$ then $\phi$ is not injective, and Lemma \ref{L4} gives the result. Now, suppose $\phi^{\prime}(0)=\mu \neq 0.$ Since  $\phi$ is not of the form $\phi(z)=tz$ where $t$ belongs to $\overline{\mathbb{D}}$, it  can be expressed as
	\begin{align*}
		\phi(z)=\mu z \left( 1+bz^s(1+g(z)) \right),
	\end{align*} 
	where $s\in{\mathbb{N}}$, $b \neq 0$ and $g \in H(\mathbb{D})$ with $g(0)=0.$ Therefore, for any $r\in\mathbb{N}$
	\begin{align}\label{Theo2e1}
		\phi^r(z)=\mu^r z^r+rb\mu^rz^{r+ s}+\mbox{ terms of $z$ with higher powers}.
	\end{align}

Let
$\psi=\sum_{k=0}^{\infty}\hat{\psi}_kz^k$. If $\hat{\psi}_0=0$, we get the result from Lemma \ref{L4}. If $\hat{\psi}_0 \neq 0$, consider the two dimensional subspace $M_r$=span$\{e_r(z)=\frac{z^r}{\sqrt{r+1}}, e_{r+s}(z)=\frac{z^{r+s}}{\sqrt{r+s+1}}$\}. Then compression of $C_{\psi,\phi}$ to $M_r$  has the matrix form
	\begin{align*} 
		\begin{pmatrix}
			\hat{\psi}_0\mu^r&0\\
			\sqrt{\frac{r+s+1}{r+1}}\mu^r(rb\hat{\psi}_0+\hat{\psi}_s) &\hat{\psi}_0\mu^{r+s}
		\end{pmatrix}=\mu^rE_r,
	\end{align*}
	where 
	\begin{align*} 
		E_r=\begin{pmatrix}
			\hat{\psi}_0&0\\
			\sqrt{\frac{r+s+1}{r+1}}(rb\hat{\psi}_0+\hat{\psi}_s) &\hat{\psi}_0\mu^{s}
		\end{pmatrix}.
	\end{align*}
	Since $W(\mu^rE_r) \subseteq W(C_{\psi,\phi})$ by ( \cite[Proposition 1.4]{wu_gau_book}), it is enough to show that $0$ is in the interior of $W(E_r)$ for some $r.$ The numerical range $W(E_r)$ is an ellipse with foci $\hat{\psi}_0$ and $\hat{\psi}_0\mu^{s}$, and its minor-axis having length equals $\sqrt{\frac{r+s+1}{r+1}}|rb\hat{\psi}_0+\hat{\psi}_s|,$ as described in \cite[Example 3]{GR_BOOK}. Moreover,
	$$\lim_{r \to \infty}\frac{r+s+1}{r+1}=1.$$
	Thus, by choosing $r$ large enough, the point $0$ lies in the interior of $W(E_r)$, as required. 
\end{proof}
\begin{remark}\label{Re1}
If $C_{\psi,\phi}\in \mathbb{B}\left(\mathcal{D}\right)$ and $\phi$ satisfies the conditions of Theorem \ref{Th1} and Theorem \ref{Th5}, then for any nonzero $\psi\in\mathcal{D}$, $C_{\psi,\phi}$ is compact and $0\in W(C_{\psi,\phi}).$ Therefore, by \cite[Theorem 1]{De Barra} the numerical range $W(C_{\psi,\phi})$ is a closed set.
\end{remark}
\begin{example}
    When $\phi(z)=\frac{z^2}{2}$ and $\psi$ is any nonzero polynomial, Remark \ref{Re1} implies that the numerical range $W(C_{\psi,\phi})$ is closed.
\end{example}
Next, we examine whether zero lies in the numerical range when  $\phi(z)=tz$, $-1\leq t\leq 0$ and $\psi$ is a non-constant function.
\begin{theorem}\label{Th6}
    Let $C_{\psi,\phi} \in \mathbb{B}\left(\mathcal{D}\right) $ with $\psi$ a non-constant analytic function. If $\phi(z)=tz$ for $-1\leq t\leq 0,$ then $0$ lies in the numerical range $W(C_{\psi,\phi}).$
\end{theorem}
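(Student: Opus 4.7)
The plan is to read $0$ directly off the diagonal of the matrix of $C_{\psi,\phi}$ with respect to the orthonormal basis $\{e_n\}_{n\geq 0}$ of $\mathcal{D}$ given by $e_n(z)=z^n/\sqrt{n+1}$. Since each $e_n$ is a unit vector, every diagonal entry $\langle C_{\psi,\phi}e_n, e_n\rangle_{\mathcal{D}}$ automatically belongs to $W(C_{\psi,\phi})$, and since the numerical range is convex, it is enough to locate $0$ in the convex hull of finitely many such entries.

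The first step is a short coefficient computation. Writing $\psi(z)=\sum_{k\geq 0}\hat{\psi}_k z^k$, we have
$$C_{\psi,\phi}e_n(z)\;=\;\psi(z)\,\frac{(tz)^n}{\sqrt{n+1}}\;=\;\sum_{k\geq 0}\frac{t^n\hat{\psi}_k}{\sqrt{n+1}}\,z^{n+k}.$$
Applying the Dirichlet inner product formula $\langle f,g\rangle_{\mathcal{D}}=\sum_{m\geq 0}(m+1)\hat{f}_m\overline{\hat{g}_m}$ and noting that $e_n$ is supported only at the $n$-th coefficient, the inner product collapses to the $k=0$ contribution, giving
$$\langle C_{\psi,\phi}e_n, e_n\rangle_{\mathcal{D}}\;=\;t^n\,\psi(0),\qquad n\geq 0.$$
In particular both $\psi(0)$ (from $n=0$) and $t\psi(0)$ (from $n=1$) belong to $W(C_{\psi,\phi})$.

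Next I would split into three cases according to whether $\psi(0)$ or $t$ vanishes. If $\psi(0)=0$, the $n=0$ entry is already $0$, so $0\in W(C_{\psi,\phi})$. If $t=0$, the $n=1$ entry is $0$, so again $0\in W(C_{\psi,\phi})$. Finally, if $\psi(0)\neq 0$ and $t\in[-1,0)$, then $\psi(0)$ and $t\psi(0)$ are nonzero real multiples of $\psi(0)$ lying on opposite sides of the origin along the line $\mathbb{R}\psi(0)$, and the identity
$$0\;=\;\frac{|t|}{1+|t|}\,\psi(0)\;+\;\frac{1}{1+|t|}\bigl(t\psi(0)\bigr)$$
exhibits $0$ as an honest convex combination of two points of $W(C_{\psi,\phi})$, so convexity closes the argument.

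There is no serious obstacle here: the proof rests entirely on the explicit formula $\langle C_{\psi,\phi}e_n,e_n\rangle_{\mathcal{D}}=t^n\psi(0)$ together with convexity of the numerical range, and the hypothesis $-1\leq t\leq 0$ is used precisely to arrange the sign change that places $\psi(0)$ and $t\psi(0)$ on opposite rays through the origin. I note that the assumption that $\psi$ is non-constant is not actually needed for this particular conclusion, but it is consistent with the other hypotheses used in the section.
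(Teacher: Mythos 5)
Your proof is correct, and it takes a genuinely different and more elementary route than the paper's. The key computation $\langle C_{\psi,\phi}e_n,e_n\rangle_{\mathcal{D}}=t^n\psi(0)$ is right (only the $k=0$ term of $\psi$ survives against the single nonzero coefficient of $e_n$, and the weights $(n+1)$ cancel the normalizations), and the convex combination $0=\tfrac{|t|}{1+|t|}\psi(0)+\tfrac{1}{1+|t|}\,t\psi(0)$ together with convexity of $W(C_{\psi,\phi})$ finishes the job; the degenerate cases $\psi(0)=0$ and $t=0$ are handled by single diagonal entries. By contrast, the paper splits into three cases and leans on external machinery: for $t=0$ it invokes the rank-one classification of Lemma \ref{L3} (via \cite[Proposition 2.5]{BS_IEOT_2002}); for $\psi(0)=0$ it invokes Lemma \ref{L4} (via \cite[Theorem 2.6]{KEY_JME_2021}); and for $-1\le t<0$, $\psi(0)\neq 0$ it normalizes $\psi(0)=1$, writes $C_{\psi,\phi}=C_{\phi}+C_{\xi,\phi}$ with $\xi(0)=0$, and uses Lemma \ref{L4} to produce points of $W(C_{\psi,\phi})$ in both open half-planes, which combined with the real diagonal entries $1$ and $t$ yields $0$ in the \emph{interior} of $W(C_{\psi,\phi})$. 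That is the trade-off: your argument is shorter, self-contained, and (as you note) does not need $\psi$ non-constant, but it only places $0$ in $W(C_{\psi,\phi})$, possibly on the boundary; the paper's argument is heavier but actually proves the stronger interior statement (which is what makes the non-constancy hypothesis do real work there, since $\xi=\psi-\psi(0)$ must be nonzero to apply Lemma \ref{L4}). Since the theorem as stated only claims $0\in W(C_{\psi,\phi})$, your proof fully establishes it.
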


\begin{proof}
	When $\psi$ is non-constant and $t=0$, the result follows directly from Lemma \ref{L3}. For $-1\leq t< 0,$ start with the case $\psi(0)=0$, where Lemma \ref{L4} already provides the conclusion.
	Now, assume that $-1\le t<0$ and $\psi(0)\neq 0$. We may set $\psi(0)=1$ without loss of generality, so $\psi(z)=1+\xi(z),$ where  $\xi\in H(\mathbb{D})$ with $\xi(0)=0.$ Then $C_{\xi,\phi}\in \mathbb{B}\left(\mathcal{D}\right)$ and for every $f\in\mathcal{D}$
	\begin{align*}
		\langle C_{\psi,\phi}f,f \rangle_{\mathcal{D}}=\langle C_{\phi}f,f \rangle_{\mathcal{D}}+\langle C_{\xi,\phi}f,f \rangle_{\mathcal{D}}.
	\end{align*}
 	 For $-1 \leq t <0,$ and $\xi(0)=0$ Lemma \ref{L4} ensures  that zero is an interior point of  $W(C_{\xi,\phi})$. Hence, there exists $h \in \mathcal{D}$ with $\|h\|=1$ such that $Im \langle C_{\xi,\phi}h,h\rangle_{\mathcal{D}}>0.$ Since $\langle C_{\phi}h,h \rangle_{\mathcal{D}}$ is real and  $Im \langle C_{\xi,\phi}h,h\rangle_{\mathcal{D}}>0,$ we obtain a point $r_1\in W(C_{\psi,\phi})$ in the upper half plane. Using the same argument, there exists a point $r_2\in W(C_{\psi,\phi})$ in the lower half plane. Furthermore, since $\langle C_{\psi,\phi}e_1,e_1 \rangle_{\mathcal{D}}=t$ and $\langle C_{\psi,\phi}e_0,e_0 \rangle_{\mathcal{D}}=1$ with  ${e_1}(z)=\frac{z}{\sqrt2}$ and $e_0(z)=1$, we conclude that $0$ lies in the interior of  $W(C_{\psi,\phi})$.
\end{proof}

\begin{remark}\label{Re2}
    If $C_{\psi,\phi}\in \mathbb{B}\left(\mathcal{D}\right)$ and $\phi,\psi$  satisfy the conditions of Theorem \ref{Th3} and Theorem \ref{Th6}, then $C_{\psi,\phi}$ is compact and $0\in W(C_{\psi,\phi}).$ Therefore, by \cite[Theorem 1]{De Barra} the numerical range $W(C_{\psi,\phi})$ is a closed set. 
\end{remark}
\begin{example}
 when $\phi(z)=-\frac{z}{2}$ and  $\psi$ is any non-constant polynomial, Remark \ref{Re2} implies that the numerical range $W(C_{\psi,\phi})$ is closed.
\end{example}
\section{Numerical Ranges Containing Circular or Elliptical Disks }\label{S3}
 In this section, we concentrate on weighted composition operators whose numerical range includes either a circular disk or an elliptical disk. For such operators, we additionally determine the radius of the circular disk and the length of the axes of the elliptical disk.
\begin{theorem}\label{Th7}
	Let	$C_{\psi,\phi} \in  \mathbb{B}\left(\mathcal{D}\right)$ where $\phi(z)=e^{2\pi i/r} z$ $(r\in\mathbb{N})$ and $\psi(z)=\sum_{k=0}^{\infty}\hat{\psi}_kz^k.$ Suppose there exists positive integers  $s_2>s_1$ such that $\hat{\psi}_{rs_1}\hat{\psi}_{rs_2}\hat{\psi}_{r(s_2-s_1)}=0$ but at least one of these three coefficients is non-zero. Then $W(C_{\psi,\phi})$ contains a circular disk with centered  at $\hat{\psi}_0$ with radius 
    $$\frac{1}{2}\sqrt{(rs_1+1)|\hat{\psi}_{rs_1}|^2+(rs_2+1)|\hat{\psi}_{rs_2}|^2+\frac{(rs_2+1)}{(rs_1+1)}|\hat\psi_
    {r(s_2-s_1)}|^2}.$$
	
\end{theorem}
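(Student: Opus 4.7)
The plan is to exhibit a three-dimensional subspace $M\subseteq\mathcal{D}$ on which the compression of $C_{\psi,\phi}$ is a concrete lower-triangular $3\times 3$ matrix whose numerical range we can control, and then invoke the standard inclusion $W(P_M C_{\psi,\phi}|_M)\subseteq W(C_{\psi,\phi})$.

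First I would compute the matrix of $C_{\psi,\phi}$ in the orthonormal basis $\{e_n\}$. With $\mu=e^{2\pi i/r}$ and $e_n(z)=z^n/\sqrt{n+1}$, a direct expansion yields $\langle C_{\psi,\phi}e_n,e_m\rangle_{\mathcal{D}}=\mu^n\hat\psi_{m-n}\sqrt{(m+1)/(n+1)}$ for $m\geq n$ and $0$ otherwise. Specialising to $M=\mathrm{span}\{e_0,e_{rs_1},e_{rs_2}\}$ and using $\mu^r=1$, the compression of $C_{\psi,\phi}$ to $M$ equals $A=\hat\psi_0 I+N$ with
\[
N=\begin{pmatrix} 0 & 0 & 0 \\ \alpha & 0 & 0 \\ \beta & \gamma & 0 \end{pmatrix},
\]
where $\alpha=\hat\psi_{rs_1}\sqrt{rs_1+1}$, $\beta=\hat\psi_{rs_2}\sqrt{rs_2+1}$, and $\gamma=\hat\psi_{r(s_2-s_1)}\sqrt{(rs_2+1)/(rs_1+1)}$. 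Since $|\alpha|^2+|\beta|^2+|\gamma|^2$ equals four times the square of the advertised radius and $W(A)=\hat\psi_0+W(N)\subseteq W(C_{\psi,\phi})$, the problem reduces to showing $W(N)$ contains the closed disk of radius $\tfrac12\sqrt{|\alpha|^2+|\beta|^2+|\gamma|^2}$ about the origin.

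The second step is to establish that $W(N)$ is rotationally symmetric about $0$. In each of the three cases allowed by the hypothesis one produces a diagonal unitary $D_\theta$ with $D_\theta^* N D_\theta=e^{i\theta}N$: for $\beta=0$ take $D_\theta=\mathrm{diag}(1,e^{-i\theta},e^{-2i\theta})$, for $\alpha=0$ take $D_\theta=\mathrm{diag}(1,1,e^{-i\theta})$, and for $\gamma=0$ take $D_\theta=\mathrm{diag}(e^{i\theta},1,1)$. Unitary invariance of the numerical range then gives $e^{i\theta}W(N)=W(N)$ for every $\theta$, and combined with convexity and $0\in W(N)$ (from $\langle Ne_0,e_0\rangle=0$) this forces $W(N)$ to be the closed disk centered at $0$ whose radius is the numerical radius $w(N)$.

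It remains to compute $w(N)$. When $\alpha=0$ or $\gamma=0$ one checks at once that $N^2=0$, so the standard equality $w(N)=\|N\|/2$ for square-zero operators applies, and diagonalising $N^*N$ yields $\|N\|=\sqrt{|\beta|^2+|\gamma|^2}$ or $\sqrt{|\alpha|^2+|\beta|^2}$, matching the target radius since the omitted term vanishes. The main obstacle is the case $\beta=0$ with $\alpha,\gamma\neq 0$, where $N^2\neq 0$ and the square-zero shortcut is unavailable. I would handle it by direct optimization: for a unit vector $v=(a,b,c)$, $\langle Nv,v\rangle=\alpha a\overline{b}+\gamma b\overline{c}$, so the triangle inequality and Cauchy--Schwarz give $|\langle Nv,v\rangle|\le |b|(|\alpha||a|+|\gamma||c|)\le |b|\sqrt{1-|b|^2}\sqrt{|\alpha|^2+|\gamma|^2}$, which the elementary bound $t\sqrt{1-t^2}\le\tfrac12$ caps at $\tfrac12\sqrt{|\alpha|^2+|\gamma|^2}$; a matching lower bound comes from taking $|b|=1/\sqrt 2$, $(|a|,|c|)$ proportional to $(|\alpha|,|\gamma|)$, and aligning phases so that both terms of $\alpha a\overline{b}+\gamma b\overline{c}$ are positive reals. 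Combining the three cases produces the required disk in $W(C_{\psi,\phi})$.
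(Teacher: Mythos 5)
Your proposal is correct and follows essentially the same route as the paper: both compress $C_{\psi,\phi}$ to $\mathrm{span}\{e_0,e_{rs_1},e_{rs_2}\}$, obtain the same lower-triangular $3\times 3$ matrix, and use $W(\text{compression})\subseteq W(C_{\psi,\phi})$. The only difference is that the paper cites Keeler--Rodman--Spitkovsky (Theorem 4.1 of \cite{KRS_LAA_1997}) for the fact that this matrix has a circular disk as its numerical range, whereas you prove that special case from scratch via the diagonal-unitary rotation trick and the numerical-radius computations, which is a valid, self-contained substitute.
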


\begin{proof}
	Let $M_1$=span\{$e_0$, $e_{rs_1}$, $e_{rs_2}$\}, where $e_0=1$, ${e_{rs_1}}(z)=\frac{z^{rs_1}}{\sqrt{rs_1+1}}$ and ${e_{rs_2}}(z)=\frac{z^{rs_2}}{\sqrt{rs_2+1}}$. For these basis vectors, the operator $C_{\psi,\phi}$ acts as 
	\begin{align*}
	C_{\psi,\phi}e_0(z)=\sum_{k=0}^{\infty}\hat{\psi}_kz^{k},
	\end{align*}
	\begin{align*}
		C_{\psi,\phi}e_{rs_1}(z)=\frac{1}{\sqrt{rs_1+1}}\sum_{k=0}^{\infty}\hat{\psi}_kz^{rs_1+k}
	\end{align*}
	and 
	\begin{align*}
		C_{\psi,\phi}e_{rs_2}(z)=\frac{1}{\sqrt{rs_2+1}}\sum_{k=0}^{\infty}\hat{\psi}_kz^{rs_2+k}.
	\end{align*}
	 Thus the compression of $C_{\psi,\phi}$ to the subspace $M_1$ is represented by  the matrix 
    \begin{align*} 
	\begin{pmatrix}
		\hat{\psi_{0}} & 0 & 0 \\
		\sqrt{rs_1+1}\hat{\psi}_{rs_1} & \hat{\psi}_0 & 0 \\
		\sqrt{rs_2+1}\hat{\psi}_{rs_2} &
		\frac{\sqrt{rs_2+1}}{\sqrt{rs_1+1}}\hat{\psi}_
        {r(s_2-s_1)}& \hat{\psi}_0
	\end{pmatrix}.
\end{align*}
	According to  \cite[Theorem 4.1]{KRS_LAA_1997} the numerical range of the compression of $C_{\psi,\phi}$  to the subspace $M_1$ is the disk centered at $\hat{\psi}_0$ with radius 
    $$\frac{1}{2}\sqrt{(rs_1+1)|\hat{\psi}_{rs_1}|^2+(rs_2+1)|\hat{\psi}_{rs_2}|^2+\frac{(rs_2+1)}{(rs_1+1)}|\hat\psi_
    {r(s_2-s_1)}|^2}.$$Since the numerical range of the compression is contained in  $W( C_{\psi,\phi})$, the required result follows.
\end{proof}

\begin{theorem}\label{Th8}
	Let	 $C_{\psi,\phi} \in  \mathbb{B}\left(\mathcal{D}\right)$ where $\phi(0)=0$ and the weight function  $\psi$ has a zero  of multiplicity $r>0$ at the origin. If $\hat{\psi}_r$ denotes the coefficient of $z^r$  in the Taylor expansion of $\psi,$ then $W(C_{\psi,\phi})$ contains the disk centered at origin with radius $\frac{r+1}{r+2}|\hat{\psi}_r|$.
\end{theorem}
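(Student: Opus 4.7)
The plan is to realize the whole circle of radius $\tfrac{r+1}{r+2}|\hat{\psi}_r|$ centred at the origin as the image of a one-parameter family of unit vectors under $f\mapsto \langle C_{\psi,\phi}f, f\rangle_{\mathcal{D}}$, and then invoke the convexity of $W(C_{\psi,\phi})$ to deduce that the closed disk enclosed by this circle lies in the numerical range.

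Concretely, for each $\theta\in[0, 2\pi)$ I would set
\[
f_\theta(z) = \frac{1}{\sqrt{r+2}}\bigl(1 + e^{i\theta}z^r\bigr).
\]
Since $1$ and $z^r$ are orthogonal in $\mathcal{D}$ with $\|1\|_{\mathcal{D}}^2=1$ and $\|z^r\|_{\mathcal{D}}^2 = r+1$, we have $\|f_\theta\|_{\mathcal{D}} = 1$. A direct expansion then gives
\[
C_{\psi,\phi}f_\theta(z) = \tfrac{1}{\sqrt{r+2}}\bigl(\psi(z) + e^{i\theta}\psi(z)\phi(z)^r\bigr),
\]
and the inner product $\langle C_{\psi,\phi}f_\theta, f_\theta\rangle_{\mathcal{D}}$ reduces to a linear combination of the four scalars $\langle \psi, 1\rangle_{\mathcal{D}}$, $\langle\psi, z^r\rangle_{\mathcal{D}}$, $\langle \psi\phi^r, 1\rangle_{\mathcal{D}}$, and $\langle \psi\phi^r, z^r\rangle_{\mathcal{D}}$. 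The first vanishes because $\hat{\psi}_0=0$; the second equals $(r+1)\hat{\psi}_r$ via the series formula $\langle f,g\rangle_{\mathcal{D}}=\sum_n(n+1)\hat{f}_n\overline{\hat{g}_n}$; and the last two are zero because under the joint hypothesis $\phi(0)=0$ and $\hat{\psi}_0=\hat{\psi}_1=\cdots=\hat{\psi}_{r-1}=0$, the Taylor series of $\psi\phi^r$ starts at $z^{2r}$, so its $z^0$ and $z^r$ coefficients both vanish. What remains is
\[
\langle C_{\psi,\phi}f_\theta, f_\theta\rangle_{\mathcal{D}} = \frac{(r+1)\hat{\psi}_r}{r+2}\,e^{-i\theta}.
\]

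As $\theta$ sweeps $[0,2\pi)$ this traces the entire circle of radius $\tfrac{r+1}{r+2}|\hat{\psi}_r|$ centred at $0$, and since $W(C_{\psi,\phi})$ is convex, its convex hull, namely the closed disk of that radius, is contained in the numerical range. The only conceptually nontrivial step is identifying the correct test family $f_\theta$; the two conditions $\phi(0)=0$ and the order-$r$ vanishing of $\psi$ at $0$ are jointly what drive the two mixed inner products $\langle \psi\phi^r, 1\rangle_{\mathcal{D}}$ and $\langle \psi\phi^r, z^r\rangle_{\mathcal{D}}$ to zero, so weakening either of them would leave extra terms that spoil the perfectly circular orbit. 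I do not expect any further difficulty once this family is in hand.
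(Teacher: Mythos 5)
Your proposal is correct and follows essentially the same route as the paper: the paper tests against $f(z)=\tfrac{1}{\sqrt{r+2}}(\mu+z^r)$ with $|\mu|=1$, which is your $f_\theta$ up to a global unimodular factor, and likewise obtains $\langle C_{\psi,\phi}f,f\rangle_{\mathcal{D}}=\tfrac{r+1}{r+2}\mu\hat{\psi}_r$ before concluding by convexity. Your accounting of why the cross terms $\langle\psi\phi^r,1\rangle_{\mathcal{D}}$ and $\langle\psi\phi^r,z^r\rangle_{\mathcal{D}}$ vanish is, if anything, a bit more explicit than the paper's.
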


\begin{proof}
	Let $f(z)=\frac{1}{\sqrt{r+2}}(\mu+z^r)$,where  $|\mu|=1$, then $f\in \mathcal{D}$ and $\|f\|_\mathcal{D}=1.$ Assume $\phi(z)=\sum_{k=1}^{\infty}\hat{\phi}_kz^k$ and $\psi(z)=\sum_{k=r}^{\infty}\hat{\psi}_kz^k$, then
	\begin{align*}
		f(\phi(z))&=\frac{1}{\sqrt{r+2}}
		\left( \mu + \left(\sum_{k=1}^{\infty}\hat{\phi}_kz^k\right)^r\right)\\
		&=\frac{1}{\sqrt{r+2}}
		\left( \mu + \hat{\phi}_1^rz^r+ \mbox{ terms of $z$ with higher powers} \right).
	\end{align*}
	Thus,
	\begin{align*}
		& \langle C_{\psi,\phi}f,f \rangle_\mathcal{D}\\
		&= \langle \psi(z)f(\phi(z)),f(z) \rangle_\mathcal{D} \\
		& = \frac{1} {r+2}\left \langle  \left( \sum_{k=r}^{\infty}\hat{\psi}_kz^k \right)\left( \mu + \hat{\phi}_1^rz^r+ \mbox{terms of $z$ with higher powers} \right),\mu+z^r\right\rangle_\mathcal{D} \\
		&=\frac{r+1}{r+2}\mu \hat{\psi}_r.
	\end{align*} 
Since $\mu$ can be any unimodular number, the numerical range  $W( C_{\psi,\phi})$ contains the disk centered at origin with radius $\frac{r+1}{r+2}|\hat{\psi}_r|$. 	
\end{proof}

\begin{theorem}\label{Th9}
 	Let	$C_{\psi,\phi} \in  \mathbb{B}\left(\mathcal{D}\right)$ where $\phi(z)=\mu z$ with $\mu \neq 0$ and $\psi(z)=\sum_{k=1}^{\infty}\hat{\psi}_kz^k.$ Then for every integer $r\geq 2$, the numerical range $W(C_{\psi,\phi})$ contains a disk, centered at origin where  radius is $\frac{\sqrt{r+1}}{2\sqrt{2}} |\hat{\psi}_{r-1}\mu|$.
\end{theorem}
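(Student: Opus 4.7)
The plan is to mimic the strategy used in Theorems \ref{Th7} and \ref{Th8}: exhibit a two--dimensional subspace on which the compression of $C_{\psi,\phi}$ has an easily computable matrix whose numerical range is exactly the desired disk, and then invoke the standard containment $W(P_N T|_N) \subseteq W(T)$ (cf.\ \cite[Proposition 1.4]{wu_gau_book}).

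The correct subspace is $N = \mathrm{span}\{e_1,\, e_r\}$, where $e_j(z) = z^j/\sqrt{j+1}$ denotes the standard orthonormal basis of $\mathcal{D}$. First I would compute the four entries of the matrix of $P_N C_{\psi,\phi}|_N$ in the basis $\{e_1, e_r\}$. Because $\psi(0) = 0$, the function $(C_{\psi,\phi} e_j)(z) = \tfrac{\mu^j}{\sqrt{j+1}}\, z^j \psi(z)$ has vanishing coefficient at every power $z^k$ with $k \leq j$; this immediately kills both diagonal entries, and together with $r \geq 2$ it also kills $\langle C_{\psi,\phi} e_r, e_1\rangle_{\mathcal{D}}$. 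A short calculation then identifies the only surviving entry as
\[
\langle C_{\psi,\phi} e_1, e_r\rangle_{\mathcal{D}} \;=\; \frac{\mu\sqrt{r+1}}{\sqrt{2}}\,\hat{\psi}_{r-1},
\]
since the coefficient of $z^r$ in $\tfrac{\mu}{\sqrt{2}}\, z\psi(z)$ is $\tfrac{\mu}{\sqrt{2}}\hat{\psi}_{r-1}$ and the Dirichlet weight at $z^r$ contributes an additional factor of $\sqrt{r+1}$.

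The compression is therefore the nilpotent $2 \times 2$ matrix $\begin{pmatrix} 0 & 0 \\ c & 0\end{pmatrix}$ with $c = \tfrac{\mu\sqrt{r+1}}{\sqrt{2}}\hat{\psi}_{r-1}$. The last step is to recognise that the numerical range of such a matrix is the closed disk of radius $|c|/2$ centered at the origin---a one--line computation using $\max_{|x|^2+|y|^2=1}|x\bar{y}| = \tfrac{1}{2}$, or equivalently the elliptical range theorem applied with coincident eigenvalues $0$. Substituting gives exactly the radius $\tfrac{\sqrt{r+1}}{2\sqrt{2}}|\hat{\psi}_{r-1}\mu|$ claimed in the statement.

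I do not foresee any real obstacle. The only creative step is choosing the pair $\{e_1, e_r\}$: taking $e_1$ on one side is what produces the factor of $\mu$ in the radius (through $e_1 \circ \phi = \mu z/\sqrt{2}$), and pairing it with $e_r$ is precisely what isolates the coefficient $\hat{\psi}_{r-1}$ while keeping the compression nilpotent, so that the resulting numerical range is a disk rather than a proper elliptical region.
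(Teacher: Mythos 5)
Your proposal is correct and follows essentially the same route as the paper: the paper also compresses $C_{\psi,\phi}$ to $\mathrm{span}\{e_1,e_r\}$, obtains the nilpotent matrix $\begin{pmatrix} 0 & 0 \\ \mu\frac{\sqrt{r+1}}{\sqrt{2}}\hat{\psi}_{r-1} & 0\end{pmatrix}$, and concludes via the elliptical range theorem that its numerical range is the stated disk. The entry computation and the final radius match exactly.
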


\begin{proof}
	For $r \geq 2$, let $M_2$ be the subspace of $\mathcal{D}$ spanned by $e_1$ and $e_r$, where ${e_1}(z)=\frac{z}{\sqrt2}$ and ${e_r}(z)=\frac{z^r}{\sqrt{r+1}}$. We then obtain
	\begin{align*}
		C_{\psi,\phi}e_1(z)=\frac{\mu}{ \sqrt{2}}\left( \sum_{k=1}^{\infty}\hat{\psi}_kz^{k+1} \right)
	\end{align*}
	and
	\begin{align*}
		C_{\psi,\phi}e_r(z)=\frac{\mu^r }{\sqrt{r+1}}\left( \sum_{k=1}^{\infty}\hat{\psi}_kz^{k+r} \right).
	\end{align*}
	Consequently, the compression of $C_{\psi,\phi}$ to $M_2$ is represented by the matrix 
	\begin{align*} 
		\begin{pmatrix}
			0&0\\
			\mu \frac{\sqrt{r+1}}{\sqrt{2}}\hat{\psi}_{r-1} &0
		\end{pmatrix}.
	\end{align*}
	Therefore, the numerical range of the compression of $C_{\psi,\phi}$ to $M_2$ is a closed disk centered at the origin and radius $\frac{\sqrt{r+1}}{2\sqrt{2}}|\hat{\psi}_{r-1}\mu|$ ( see \cite[Example 3]{GR_BOOK}). As a result, $W(C_{\psi,\phi})$ contains this disk.
\end{proof}

\begin{theorem}\label{Th10}
	Let	$C_{\psi,\phi} \in  \mathbb{B}\left(\mathcal{D}\right)$ with $\phi(z)=e^{2\pi i\theta}z$ and $\psi(z)=\sum_{k=0}^{\infty}\hat{\psi}_kz^k$ where $\theta$ is irrational. Then $W(C_{\psi,\phi})$ contains an elliptical disk where foci are $\hat{\psi}_0e^{2\pi ir\theta }$ and $\hat{\psi}_0e^{2\pi i(r+s)\theta }.$ The length of the major and the minor axes are 
	\begin{eqnarray*}
	    \sqrt{|\hat{\psi}_0|^2|e^{2\pi ir\theta }-e^{2\pi i(r+s)\theta }|^2+\frac{r+s+1}{r+1}|\hat{\psi}_{s}|^2}\,\,\,\mbox{and}\,\,\,\sqrt{\frac{r+s+1}{r+1}}|\hat{\psi}_{s}|
	\end{eqnarray*}
   respectively, where $r\geq 0$ and $s>0$ are integers.
   
\end{theorem}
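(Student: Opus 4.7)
My plan is to follow the template already established in Theorems \ref{Th7}--\ref{Th9}: isolate a two-dimensional subspace of $\mathcal{D}$ on which the compression of $C_{\psi,\phi}$ has exactly the asserted elliptical numerical range, and then invoke the fact that the numerical range of any compression is contained in the numerical range of the full operator.

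Specifically, I would let $M$ be the subspace of $\mathcal{D}$ spanned by the orthonormal vectors $e_r$ and $e_{r+s}$, and set $\mu = e^{2\pi i\theta}$. Because $\phi(z) = \mu z$, a direct expansion using $\psi(z)=\sum_{j}\hat{\psi}_j z^j$ gives
\[
C_{\psi,\phi}e_k \;=\; \mu^k\sum_{j=0}^{\infty}\hat{\psi}_j\sqrt{\frac{k+j+1}{k+1}}\,e_{k+j}.
\]
Specializing to $k=r$ and $k=r+s$ and projecting back onto $M$, the key observation is that $C_{\psi,\phi}e_{r+s}$ has no components $e_m$ with $m<r+s$, so the compression is lower triangular. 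In the ordered basis $(e_r,e_{r+s})$ one obtains
\[
T \;=\; \begin{pmatrix} \mu^r\hat{\psi}_0 & 0 \\ \mu^r\hat{\psi}_s\sqrt{(r+s+1)/(r+1)} & \mu^{r+s}\hat{\psi}_0 \end{pmatrix},
\]
whose diagonal entries are precisely the advertised foci $\hat{\psi}_0 e^{2\pi ir\theta}$ and $\hat{\psi}_0 e^{2\pi i(r+s)\theta}$.

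The concluding step would be to invoke the elliptic range theorem for $2\times 2$ matrices, the same reference already used in the proof of Theorem \ref{Th9}: $W(T)$ is a closed elliptical disk with the diagonal entries as foci and with minor-axis length equal to the modulus of the off-diagonal entry, namely $\sqrt{(r+s+1)/(r+1)}\,|\hat{\psi}_s|$. The classical relation $(\text{major})^2 = (\text{minor})^2 + |\lambda_1-\lambda_2|^2$ between the axes of an ellipse and the focal separation then yields the advertised major-axis length. Since $\theta$ is irrational, $e^{2\pi is\theta}\neq 1$, so the two foci are distinct and the ellipse is genuinely non-degenerate; the inclusion $W(T)\subseteq W(C_{\psi,\phi})$ closes the argument.

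No step is really non-routine within this template; I expect the only points needing care to be the correct bookkeeping of the weight factor $\sqrt{(r+s+1)/(r+1)}$ arising from the weighted Dirichlet inner product, and recognizing the major-axis expression as the Pythagorean combination of the minor-axis length and the focal separation.
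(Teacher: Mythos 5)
Your proposal is correct and follows essentially the same route as the paper: the authors also compress $C_{\psi,\phi}$ to $\mathrm{span}\{e_r,e_{r+s}\}$, obtain exactly the lower-triangular matrix you wrote down, and invoke the elliptical range theorem for $2\times 2$ matrices together with $W(\text{compression})\subseteq W(C_{\psi,\phi})$. The only cosmetic difference is that the paper reads the major-axis length directly from the cited lemma rather than via the Pythagorean relation, which is equivalent.
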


\begin{proof}
	Let $M_s$ be the subspace of $\mathcal{D}$ spanned  by $e_r$ and $e_{r+s}$, where ${e_r}(z)=\frac{z^r}{\sqrt{r+1}}$ and ${e_{r+s}}(z)=\frac{z^{r+s}}{\sqrt{r+s+1}}$. Then we have
	\begin{align*}
		C_{\psi,\phi}e_r(z)=e^{2\pi ir\theta }\frac{1}{\sqrt{r+1}}\sum_{k=0}^{\infty}\hat{\psi}_kz^{k+r}
	\end{align*}
	and
	\begin{align*}
		C_{\psi,\phi}e_{r+s}(z)=e^{2\pi i(r+s)\theta }\frac{1}{\sqrt{r+s+1}}\sum_{k=0}^{\infty}\hat{\psi}_kz^{k+r+s}.
	\end{align*}
	Therefore, the compression of $C_{\psi,\phi}$ to $M_s$ has the matrix representation
	\begin{align*} 
		\begin{pmatrix}
			\hat{\psi}_0e^{2\pi ir\theta}&0\\
			e^{2\pi ir\theta }\sqrt{\frac{r+s+1}{r+1}}\hat{\psi}_{s}&\hat{\psi}_0e^{2\pi i(r+s)\theta}
		\end{pmatrix}.
	\end{align*}
	The  numerical range of the compression of $C_{\psi,\phi}$ to $M_s$ forms an ellipse with foci at $\hat{\psi}_0e^{2\pi ir\theta }$ and $\hat{\psi}_0e^{2\pi i(r+s)\theta },$  having a major axis of length 
	$$\sqrt{|\hat{\psi}_0|^2|e^{2\pi ir\theta }-e^{2\pi i(r+s)\theta }|^2+\frac{r+s+1}{r+1}|\hat{\psi}_{s}|^2}$$
	and a minor axis $\sqrt{\frac{r+s+1}{r+1}}|\hat{\psi}_{s}|$ (see \cite[Lemma 1.1-1]{GR_BOOK}). Since the numerical range of a compression is always contained in the numerical range of the operator, the desired result follows. 
\end{proof}

\textbf{Acknowledgments.}
 Mr. Subhadip Halder would like to thank UGC, Govt of India, for the financial support in the form of fellowship. Miss Sweta Mukherjee is supported by the State
Government Departmental Fellowship, Government of West Bengal.

{\textbf{Conflict of interest.}
 The authors declare no conflict of interest.
 \vspace{0.2cm}

 \vspace{0.2cm}
 {\textbf{Data availability.}
Not applicable.
 
\bibliographystyle{amsplain}

\end{document}